\documentclass[11pt, final]{amsart}
\usepackage{amsmath,amsthm,amssymb,amsfonts}
\usepackage{xcolor}
\usepackage[colorlinks=false, final]{hyperref}
\usepackage{verbatim}
\usepackage{enumitem}
\usepackage{xspace}
\usepackage{geometry}
\usepackage{pst-node}
\usepackage{tikz-cd}
\setlist[enumerate,1]{label=(\arabic*)} 
\theoremstyle{plain}
\newtheorem{thm}{Theorem}[section]
\newtheorem{theorem}[thm]{Theorem} 
\newtheorem{proposition}[thm]{Proposition}
\newtheorem{lemma}[thm]{Lemma}
\newtheorem{corollary}[thm]{Corollary}
\theoremstyle{definition}
\newtheorem{nota}[thm]{Notation}
\newtheorem{definition}[thm]{Definition}
\theoremstyle{remark}
\newtheorem{remark}[thm]{Remark}
\newtheorem{example}[thm]{Example}
\newtheorem{question}[thm]{Question}
\counterwithin{equation}{section}
\counterwithin{figure}{section}

\makeatletter
\providecommand\@dotsep{5}
\makeatother
\title{Subset expansions of monoids}
\date{\today}
\author[V. Gould]{Victoria Gould}
\address{University of York}
\email{victoria.gould@york.ac.uk}
\author[M. Johnson]{Marianne Johnson}
\address{University of Manchester}
\email{Marianne.Johnson@manchester.ac.uk}
\date{\today}

\subjclass[2020]{20M10,  20M30}

\keywords{Expansions, semidirect products, finitary conditions}
\begin{document}
\begin{abstract} We initiate the study  of the expansion  $\mathcal{S}(M)$ of  a monoid $M$ obtained via the semidirect product  of $M$ acting naturally on the left of its power set (regarded as a semilattice under union).  We term this the `subset expansion' of $M$. The  monoid $\mathcal{S}(M)$ contains the images of several expansions of $M$ of wide interest and use in semigroup theory, in particular the prefix and Szendrei  expansions (in the case where $M$ is free, these `smaller' expansions produce free algebras in certain varieties).

We first focus on algebraic properties, specifically those determined by idempotents. We note that $\mathcal{S}(M)$ is regular if and only if $M$ is a group and determine the conditions on $M$ that ensure   $\mathcal{S}(M)$ possesses certain generalised regularity properties. Particularly, we show that the expansion $\mathcal{S}$ maps groups to proper inverse monoids, unipotent monoids to proper left restriction monoids, right cancellative monoids to left ample monoids, right abundant monoids to right abundant monoids, and left cancellative monoids to right adequate monoids.

Subsequently, we focus on  finitary conditions. We examine the condition of weak left  coherence (every finitely generated left ideal has a finite presentation as a left act); the related conditions of property (L), left ideal Howson, finitely left equated, and each of the corresponding left-right dual notions. Each of these conditions is preserved under retract, from which it is immediate that if $\mathcal{S}(M)$ satisfies one of our finitary conditions, then so must $M$, but the converse is not true. For each of our properties to `lift' from $M$ to $\mathcal{S}(M)$ it must undergo a `strengthening'. Indeed, we show that $\mathcal{S}(M)$ satisfies property (L) (or its left-right dual) if and only if $M$ is finite. We provide exact characterisations of the monoids $M$ such that $\mathcal{S}(M)$ is: left (or right) ideal Howson; finitely left equated; and (consequently) weakly left coherent. We give sufficient conditions for $\mathcal{S}(M)$ to be finitely right equated and hence weakly right coherent. 
\end{abstract}
\maketitle
\section{Introduction}
\label{sec:prod}
Monoid expansions provide systematic ways to build a new monoid  from a given input monoid, often by inflating the set of idempotents in some structured manner. They were introduced in \cite{BR84} to facilitate the application of techniques in finite semigroup theory to infinite monoids and semigroups. Most well-studied examples of expansions have the property that when restricting the input to an interesting class of monoids (e.g. groups, cancellative monoids, unipotent monoids), the output of the expansion is a monoid with desirable structural properties.  For example, the Szendrei expansion \cite{Sz89} maps: groups to inverse monoids \cite{BR84}, right cancellative monoids to left ample monoids \cite{FG90} and unipotent monoids to (what are now called) left restriction monoids \cite{FGG99}. In fact, \cite{BR84} used an earlier formulation, known as the prefix expansion, which is equivalent to the Szendrei expansion in the case the monoid is a group; \cite{Ho07} investigates the prefix expansion of an arbitrary monoid.  The underlying construction used in these expansions is a semidirect product formed from the monoid $M$ acting on a particular semilattice of  subsets of $M$. It is then natural to consider the `parent' expansion where the semilattice contains {\em all} subsets of $M$, and this is our focus here. Specifically,  this paper considers the expansion formed via the action of a monoid $M$  on its power set $\mathcal{P}(M)$ viewed as a monoid (indeed, a semilattice) with respect to the operation of union, and with identity element $\emptyset$, with  left action of $M$ on $\mathcal{P}(M)$ given by $( m,X) \mapsto mX = \{mx: x \in X\}$ for all $X \in \mathcal{P}(M)$ and all $m \in M$. Throughout the paper we will write $\mathcal{S}(M)$ to denote $\mathcal{P}(M) \rtimes M$ with product given by $(X,x)(Y,y) = (X \cup xY, xy)$, and with identity element $(\emptyset, 1)$, where $1$ is the identity element of $M$. We  refer to monoids of this form as `subset expansions' to indicate that this construction involves the action of $M$ on its subsets.

Our interest in subset expansions comes from several directions. First, $\mathcal{S}(M)$ contains the Szendrei expansion of $M$ as a monoid subsemigroup, and (as with the Szendrei expansion) if $G$ is a group then $\mathcal{S}(G)$ is an inverse monoid. If $M=F_X$  is the free group on a
set $X$, we remark that the monoid $\mathcal{S}(F_X)$ contains: the free inverse, free ample and free
left ample monoids on the same set $X$. For a monoid $M$ which is not a group, the semidirect product $\mathcal{S}(M)$ is no longer inverse (indeed, it is not even regular; see Lemma \ref{lem:Sreg} below for details). It is then natural to ask under which circumstances $\mathcal{S}(M)$  belongs to one of several more general classes of monoids (e.g. left/right restriction, ample, adequate, abundant).

Second, we note that any inverse monoid satisfies several \emph{finitary conditions}  (that is, conditions satisfied by every finite monoid), including weak left (respectively, right) coherence \cite{BGR23}. A monoid $M$ is said to be weakly left (respectively, right) coherent if every finitely generated left (respectively, right) ideal of $M$ is finitely presented as a left (respectively, right) $M$-act. A classical result of Wheeler \cite{Wh76}, interpreted for acts in \cite{Go87}, says that a monoid is left coherent if and only if the class of existentially closed left acts is axiomatisable. Inspired by this, weak left coherency for monoids, and the related finitary conditions we consider here, have also  been formulated in terms of the model-theoretic notion of axiomatisability: see \cite{Go871,Sh12,DGM24}.  In previous work \cite{GJ25} the construction $\mathcal{S}(M)$ has been utilised to exhibit some interesting examples of monoids that are neither left nor right coherent.  We have remarked that  the monoid $\mathcal{S}(F_X)$ contains the free inverse, free ample and free left ample monoids on set $X$, and it is known that each of the latter is weakly left and right coherent \cite{BGR23,CG21,GH}; cf. the table in \cite{GJ25}. One is then drawn to ask (see Question 8.6 of \cite{GJ25}): for which monoids $M$ is $\mathcal{S}(M)$ weakly left (respectively right) coherent?  It is known that $M$ is weakly left coherent if and only if it is simultaneously left ideal Howson (the
intersection of any two finitely generated left ideals is finitely generated) and finitely left equated (the left
annihilator congruences of $M$ are finitely generated as left congruences). We initiate the study of finitary conditions in the context of expansions of monoids by examining the behaviour of these two conditions and the closely related conditions of (R) and (L) with respect to the expansion $\mathcal{S}$.

The paper is structured as follows. In Section 2 we outline the preliminary definitions and results required for our paper and  in Section 3 we prove some structural results on subset expansions. Our main results are in Section 4. We give precise characterisations of the monoids $M$ for which (R), (L) (Subsection~\ref{sub:RL}) or the left and right ideal Howson properties hold in $\mathcal{S}(M)$ (Subsection~\ref{sub:howson}). We characterise the monoids $M$ for which $\mathcal{S}(M)$ is finitely left equated. We exhibit a configuration of elements in $M$ that prevent $\mathcal{S}(M)$ from being finitely right equated.  We introduce a notion of being strongly finitely right equated and show that $\mathcal{S}(M)$ satisfies this condition if and only if $M$ does. (Subsection~\ref{sub:fre}).  In particular, we apply our results to demonstrate that if $M$ is a right (left) LCM monoid then $\mathcal{S}(M)$ is weakly right (left) coherent. We provide a number of examples and counterexamples to illustrate and separate the properties under consideration.  We finish with some open questions.

\section{Preliminaries}
In this section we outline the  background necessary for this paper. Where the notions we discuss have clear left-right duals we do not mention them explicitly, except to set up notation. We refer the reader to \cite{H} for a fuller account of the semigroup theoretic notions we discuss. Throughout the paper $M$ denotes a monoid with identity element $1$. 

\subsection{Right congruences} An equivalence relation $\rho$ on $M$ is a {\em right congruence} if for all $a,b,c\in M$ if $a\,\rho\, b$ then $ac\,\rho\, bc$. Every right ideal of $M$ determines a Rees right congruence, but not all right congruences correspond to right ideals. If $W\subseteq M\times M$ then we denote the smallest right congruence containing  $W$ by $\rho_W$ and refer to this as the right congruence {\em generated} by $W$. The following result is standard and may be found in \cite{KKM}.

\begin{proposition}\label{prop:gen} Let $W\subseteq M\times M$ and let $a,b\in M$. Then $a\,\rho_W\, b$ if and only if there is a sequence
\[a=c_1t_1,\, d_1t_ 1=c_2t_2,\, \cdots, \, d_nt_n=b,\]
where $n\geq 0$,  $(c_i,d_i)\in W\cup W^{-1}$ and $t_i\in M$, for $1\leq i\leq n$.
\end{proposition}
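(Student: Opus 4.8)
The plan is to define a relation $\sigma$ on $M$ by declaring $a\,\sigma\,b$ precisely when a sequence of the stated form exists (where $W^{-1}=\{(d,c):(c,d)\in W\}$), and then to establish $\sigma=\rho_W$ by verifying the two characterising properties of $\rho_W$: that $\sigma$ is itself a right congruence containing $W$, and that $\sigma$ is contained in every right congruence containing $W$. Once both are shown, $\sigma$ is by definition the smallest right congruence containing $W$, which is $\rho_W$.

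First I would check that $\sigma$ is an equivalence relation. Reflexivity comes from the case $n=0$, in which the displayed chain degenerates to $a=b$, giving $a\,\sigma\,a$. Symmetry follows by reversing a witnessing sequence and interchanging each $c_i$ with $d_i$: this is legitimate because $(c_i,d_i)\in W\cup W^{-1}$ if and only if $(d_i,c_i)\in W\cup W^{-1}$. Transitivity follows by concatenating a sequence from $a$ to $b$ with one from $b$ to some $b'$, the join being valid since they meet at the common value $b$. Next I would show $\sigma$ is a right congruence: given a sequence witnessing $a\,\sigma\,b$ and any $c\in M$, replacing each $t_i$ by $t_ic$ yields, by associativity in $M$, a valid sequence from $ac$ to $bc$. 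Finally $W\subseteq\sigma$: for $(c,d)\in W$ take $n=1$, $(c_1,d_1)=(c,d)$ and $t_1=1$, so that $a=c=c_1t_1$ and $d_1t_1=d=b$. Hence $\sigma$ is a right congruence containing $W$, giving $\rho_W\subseteq\sigma$.

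For the reverse inclusion, let $\tau$ be any right congruence containing $W$; since $\rho_W$ is one such $\tau$, it suffices to prove $\sigma\subseteq\tau$. As $\tau$ is an equivalence relation it is symmetric, so $W\subseteq\tau$ forces $W^{-1}\subseteq\tau$, and thus $W\cup W^{-1}\subseteq\tau$. Now suppose $a\,\sigma\,b$ via a sequence as displayed. For each $i$ we have $(c_i,d_i)\in\tau$, and since $\tau$ is a right congruence this gives $c_it_i\,\tau\,d_it_i$. Combining these with the equalities $a=c_1t_1$, $d_it_i=c_{i+1}t_{i+1}$ for $1\leq i<n$, and $d_nt_n=b$, transitivity of $\tau$ yields $a\,\tau\,b$. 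Hence $\sigma\subseteq\tau$, and in particular $\sigma\subseteq\rho_W$, so $\sigma=\rho_W$ as required.

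The equivalence-relation axioms and the right-compatibility check are routine bookkeeping. The one point requiring slight care is the minimality argument: one must first record that any right congruence containing $W$ automatically contains $W^{-1}$, by symmetry of equivalence relations, before the chaining step is available; and one must consistently allow the degenerate case $n=0$ so that reflexivity is covered by the same formalism.
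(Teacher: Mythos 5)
Your proof is correct and complete; it is exactly the standard argument (show the relation defined by existence of a $W$-sequence is a right congruence containing $W$, then show it is contained in any right congruence containing $W$), which is precisely what the paper has in mind when it cites this as a standard fact from \cite{KKM} without proof. No gaps: the symmetry, transitivity-by-concatenation, right-compatibility via $t_i\mapsto t_ic$, and the observation that $W^{-1}$ lies in any right congruence containing $W$ are all handled properly, including the degenerate case $n=0$.
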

A sequence as in Proposition~\ref{prop:gen} is called a {\em $W$-sequence of length  $n$}; we interpret $n=0$ as saying that $a=b$. The dual result holds for left congruences; we denote the left congruence generated by $W$ by $\lambda_W$.

\subsection{Classes of monoids determined by extensions of Green's relation $\mathcal{L}$}
We recall several well-known classes of semigroups that may be viewed as generalisations of regular (every $\mathcal{L}$-class and every $\mathcal{R}$-class contains an idempotent) and inverse (regular and the idempotents form a semilattice) monoids determined by relations that extend Green's relations $\mathcal{L}$ and $\mathcal{R}$ with respect to a distinguished semilattice of idempotents. We recap the relevant definitions extending $\mathcal{L}$; for brevity we omit the natural left-right dual definitions extending $\mathcal{R}$. 

\begin{definition}(Right annihilator congruence, $\mathcal{L}^*$, $\tilde{\mathcal{L}}_E$, right abundant, right Fountain)\label{defn:rac}  For $a \in M$ let
$$\mathbf{r}(a) := \{(u, v) \in  M \times M : au 
\, = \, av\}.$$
It is straightforward to check that $\mathbf{r}(a)$ is a right congruence; we call this the \emph{right annihilator congruence} of $a$. We denote by $\mathcal{L}^*$ the equivalence relation on $M$ defined (for all $a,b \in M$) by $a \,\mathcal{L}^*\, b$ if $\mathbf{r}(a) = \mathbf{r}(b)$. The relation $\mathcal{L}^*$ is a generalisation of  Green's $\mathcal{L}$-relation in that it contains   $\mathcal{L}$ and is a right congruence.  Let $E(M)$ denote the set of
idempotents of $M$, and let $E \subseteq E(M)$. The pre-order  $\leq_{\widetilde{\mathcal{L}}_E}$ on $M$ is defined by the rule that for all $a,b\in M$:
\[a\leq_{\widetilde{\mathcal{L}}_E} b\mbox{ if and only if } \{ e\in E: be=b\} \subseteq \{ e\in E: ae=a\}.\] The associated equivalence relation is denoted by $\widetilde{\mathcal{L}}_E$.
Thus, $a\,\widetilde{\mathcal{L}}_E\, b$
if and only if $a$ and $b$ have the same  right identities from $E$. 
The relation $\widetilde{\mathcal{L}}_E$ is a generalisation of both Green's $\mathcal{L}$-relation 
and the relation $\mathcal{L}^*$: indeed, we have that $\mathcal{L} \subseteq \mathcal{L}^* \subseteq \mathcal{L}_E$, and if $
M$ is regular we have $\mathcal{L} = \widetilde{\mathcal{L}}_{E(M)}$. In general, however, unlike the relations $\mathcal{L}$ and $\mathcal{L}^*$, the relation $\widetilde{\mathcal{L}}_E$ need not be a right congruence.  We say that $M$ is \emph{right abundant} (respectively, \emph{right Fountain}) if every $\mathcal{L}^*$--class (respectively, $\widetilde{\mathcal{L}}$--class) of $M$ contains an idempotent. 
\end{definition}

\begin{definition}(Right $E$-Ehresmann, $E$-adequate, $E$-ample and $E$-restriction monoids)
Suppose now that $E \subseteq E(M)$ is a semilattice (for example, one could take $E=\{1\}$). Then each $\widetilde{\mathcal{L}}_E$-class of $M$ contains at most one idempotent of $E$, and the monoid $M$ is said to be {\em right $E$-Ehresmann} if every $\widetilde{\mathcal{L}}_E$-class of $M$ contains a (unique) element of $E$ and $\widetilde{\mathcal{L}}_E$ is a right congruence.  In a right $E$-Ehresmann monoid then, there is a natural unary operation determined by $x \mapsto x^*$ for all $x \in M$ where $x^*$ denotes the unique element of $E$ in the $\widetilde{\mathcal{L}}_E$ class of $x$. If $M$ is right $E$-Ehresmann and $\widetilde{\mathcal{L}}_E = \mathcal{L}^*$, then $M$ is said to be \emph{right $E$-adequate}.
We say that $M$ is \emph{right $E$-ample} (respectively, \emph{right $E$-restriction}) if it is right $E$-adequate (respectively right $E$-Ehresmann) and satisfies the right ample identity  $ea = a(ea)^*$ for all $a \in M$ and all $e \in E$.  We denote by $\sigma_E$ the least (two-sided) congruence on $M$ identifying all elements of $E$ or simply $\sigma$ in the case where $E=E(M)$. An inverse  (respectively, right $E$-ample; right $E$-restriction) monoid $M$ is said to be \emph{proper} if $\mathcal{L} \, \cap\,  \sigma$ (respectively, $\mathcal{L}^* \, \cap \sigma_E$; $\tilde{\mathcal{L}}_E \, \cap \sigma_E$) is trivial.
\end{definition}

\begin{remark}
In a left $E$-Ehresmann monoid we use the notation $x^+$ to denote the unique element of $E$ in the $\widetilde{\mathcal{R}}$-class of $x$. The left ample identity is then $ae = (ae)^+a$ for all $a \in M$ and all $e \in E$.
\end{remark}

\begin{remark}
If $E(M)$ is itself a semilattice, then we may take $E=E(M)$ in each of the above definitions; in this case we suppress the $E$ denoting the corresponding relation by $\tilde{\mathcal{L}}$ and using the terminology right Ehresmann, right adequate, right ample and right restriction. For consistency we keep this terminology although we note that if $M$ is right $E$-ample then it turns out that we must in fact have $E=E(M)$. To see this, let $e\in E(M)$ and notice that since $e\, \widetilde{\mathcal{L}}_E \, e^*$ we have  $ee^* = e$ giving $ee=ee^*$ so that (using that $e\, \mathcal{L}^*_E \, e^*$) $e^*e=e^*e^*=e^*$; applying the right ample identity then gives $e^* =e^*e = e(e^*e)^* = e(e^*)^* = ee^* = e$ and hence $e \in E$.
\end{remark}

\subsection{A suite of finitary conditions}
Weak right coherence is a finitary condition for monoids (in the sense that every finite monoid is weakly coherent) and is defined as follows:
\begin{definition}(Weak right coherence)
\label{sec:weak}
A monoid $M$ is \emph{weakly right coherent} if every finitely generated right ideal of M is finitely presented as a right $M$-act. 
\end{definition}
In practice, we shall make use of the following characterisation of weak right coherence.
\begin{theorem}
\cite[Corollary 3.3]{G}
\label{thm:weakcoherent}
A monoid $M$ is weakly right coherent if and only if
\begin{enumerate}    \item $aM \cap bM$ is finitely generated;
\item 
$\mathbf{r}(a)$ is a finitely generated right congruence on $M$.
\end{enumerate}
\end{theorem}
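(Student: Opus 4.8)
The plan is to work directly with presentations of finitely generated right ideals as right $M$-acts. Given a finitely generated right ideal $I = a_1M \cup \cdots \cup a_nM$, I would take the free right $M$-act $F = \bigsqcup_{i=1}^n x_i M$ on generators $x_1, \ldots, x_n$ and the $M$-act epimorphism $\pi \colon F \to I$ determined by $x_i \mapsto a_i$. Writing $\theta = \ker \pi$, so that $x_i u \mathrel{\theta} x_j v$ if and only if $a_i u = a_j v$ in $M$, the ideal $I$ is finitely presented exactly when $\theta$ is a finitely generated congruence on $F$. The whole argument then reduces to analysing $\theta$ through two kinds of pairs: the \emph{diagonal} pairs $x_i u \mathrel{\theta} x_i v$, which hold precisely when $(u,v) \in \mathbf{r}(a_i)$, and the \emph{off-diagonal} pairs $x_i u \mathrel{\theta} x_j v$ with $i \neq j$, whose common image $a_i u = a_j v$ ranges over $a_iM \cap a_jM$.

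For the forward implication I would specialise this set-up. Taking $I = aM$ (so that $n = 1$ and $F \cong M$ as right acts) carries $\theta$ isomorphically onto $\mathbf{r}(a)$; since $aM$ is finitely presented, $\theta$, and hence $\mathbf{r}(a)$, is finitely generated, giving (2). Taking $I = aM \cup bM$ with $x_1 \mapsto a$, $x_2 \mapsto b$, and fixing a finite generating set for $\theta$, I would isolate the finitely many off-diagonal generators $(x_1 u_k, x_2 v_k)$ and set $c_k = a u_k = b v_k \in aM \cap bM$. For arbitrary $c = am = bn \in aM \cap bM$ the pair $x_1 m \mathrel{\theta} x_2 n$ is witnessed by a connecting sequence as in the act-analogue of Proposition~\ref{prop:gen}; since every term of such a sequence has the same $\pi$-image $c$, tracking the first step that crosses from the $x_1$-branch to the $x_2$-branch shows $c \in c_kM$ for some $k$. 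Hence $aM \cap bM = \bigcup_k c_kM$ is finitely generated, giving (1).

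For the converse I would assume (1) and (2) and exhibit a finite set $W$ generating the kernel $\theta$ of the presentation of $I = a_1M \cup \cdots \cup a_nM$. Into $W$ I would place: finitely many diagonal pairs $(x_i s, x_i t)$ coming from a finite generating set of each $\mathbf{r}(a_i)$ (finite by (2)); and, for each pair $i \neq j$, finitely many off-diagonal pairs $(x_i u_l, x_j v_l)$, one per generator $c_l^{ij}$ of the finitely generated ideal $a_iM \cap a_jM$ (finite by (1)), chosen so that $c_l^{ij} = a_i u_l = a_j v_l$. To see $\rho_W = \theta$, I would take any $x_i m \mathrel{\theta} x_j n$ with $i \neq j$, write its common value $c = a_i m = a_j n$ as $c = c_l^{ij} t$, and then chain $x_i m \mathrel{\rho_W} x_i u_l t \mathrel{\rho_W} x_j v_l t \mathrel{\rho_W} x_j n$: the two outer links are instances of diagonal generators, since $(m, u_l t) \in \mathbf{r}(a_i)$ and $(v_l t, n) \in \mathbf{r}(a_j)$, while the middle link is the off-diagonal generator $(x_i u_l, x_j v_l)$ right-multiplied by $t$.

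The main obstacle I anticipate lies in the bookkeeping of these chaining arguments: in the forward direction, extracting from an abstract connecting sequence the precise branch-crossing step that certifies $c \in c_kM$, and in the converse, confirming that the diagonal generators really do absorb the residual right multiplier $t$ so that every $\theta$-pair decomposes as displayed. Both rely on the explicit description of generated (right) congruences in Proposition~\ref{prop:gen}, applied to the act $F$, together with the standard fact that finite presentability of an act does not depend on the chosen finite generating set, which I would use to move freely between presentations.
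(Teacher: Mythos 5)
Your argument is correct. Note that the paper does not prove this statement at all --- it is imported verbatim from \cite[Corollary 3.3]{G} --- and your reconstruction, presenting a finitely generated right ideal via the canonical epimorphism from a free act and splitting its kernel into diagonal pairs (controlled by the congruences $\mathbf{r}(a_i)$) and off-diagonal pairs (controlled by the intersections $a_iM\cap a_jM$, with the ``first branch-crossing'' argument in one direction and the three-link chain $x_im \to x_iu_lt \to x_jv_lt \to x_jn$ in the other), is exactly the standard argument underlying the cited result. The only point left implicit is the diagonal case $i=j$ in the converse, which is immediate from the diagonal generators you have already placed in $W$, so there is no gap.
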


It will be convenient to have some terminology corresponding to the conditions of the previous theorem.
\begin{definition}(Right ideal Howson and finitely right equated monoids) A monoid with the property that the intersection of any two principal (or equivalently two finitely generated) right ideals is finitely generated is said to be {\em right ideal Howson}. A monoid $M$ with the property that each right annihilator congruence of the form $\mathbf{r}(a)$ where $a \in M$ is finitely generated are said to be {\em finitely right equated}. 
\end{definition}

Another related finitary condition arises as follows. Here the {\em right diagonal act} of $M$ is the set $M \times M$,
with co-ordinatewise right action by $M$, that is, $(a,b)m=(am,bm)$ for all $(a,b)\in M\times M$ and $m\in M$. A  right subact $A$ of $M\times M$ is finitely generated if there is a finite set $X\subseteq A$ such that
$A=XM$.

\begin{definition}(Condition (R))\label{defn:R}
For $a,b \in M$ let
$$\mathbf{R}(a,b) := \{(u, v) \in  M \times M : au \, = \, bv\} \subseteq M \times M.$$
It is easy to see that $\mathbf{R}(a,b)$ is a right subact of the right diagonal act of $M$. We say that $M$ {\em satisfies condition (R)} if for all $a,b \in M$ the right subact $\mathbf{R}(a,b)$ is finitely generated. \end{definition}

Each of these properties behaves well with respect to retraction:

\begin{theorem}\cite[Corollary 3.5, Corollary 4.12, Theorem 5.5]{DGM24},  \cite[Theorem 6.2]{GH} and \cite[Theorem 2.3]{GHS}
\label{thm:retract}
Let $M$ be a monoid and $S$ be a retract of $M$. If $M$ is right ideal Howson (respectively, finitely right equated; weakly right coherent, satisfies condition (R)), then so is $S$.
\end{theorem}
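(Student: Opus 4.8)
The plan is to fix, once and for all, the data of a retraction: we may regard $S$ as a submonoid of $M$ equipped with a surjective homomorphism $\theta\colon M\to S$ whose restriction to $S$ is the identity map. The single idea driving all four cases is that a finite generating set for the relevant object in $M$ can be transported back into $S$ by applying $\theta$. Since $\theta$ is a homomorphism fixing $S$ pointwise, it sends the ambient $M$-witnesses to $S$-witnesses, and any element of $S$ expressed over the $M$-generators becomes, after applying $\theta$, an expression over their images. In each case the $S$-version of the object is the restriction of the $M$-version (to $S$, or to $S\times S$), and the work is to check that $\theta$ carries the generators into this restriction.

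For condition (R), suppose $\mathbf{R}(a,b)\subseteq M\times M$ is generated as a right subact by the finite set $\{(u_i,v_i):1\le i\le n\}$, where $a,b\in S$. Given $(p,q)\in S\times S$ with $ap=bq$, we have $(p,q)=(u_i m,v_i m)$ for some $i$ and some $m\in M$; applying $\theta$ componentwise and using $\theta(p)=p$ and $\theta(q)=q$ gives $(p,q)=(\theta(u_i),\theta(v_i))\cdot\theta(m)$ with $\theta(m)\in S$. Since $au_i=bv_i$ forces $a\theta(u_i)=b\theta(v_i)$, each pair $(\theta(u_i),\theta(v_i))$ lies in the $S$-version of $\mathbf{R}(a,b)$, which is therefore finitely generated. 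The right ideal Howson case is entirely analogous, working with principal right ideals (it suffices to treat these, by the stated equivalence): a finite generating set $c_1,\dots,c_n$ for $aM\cap bM$ yields the finite generating set $\{\theta(c_1),\dots,\theta(c_n)\}$ for $aS\cap bS$, after noting that each $\theta(c_i)$ again lies in $aS\cap bS$.

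The main obstacle is the finitely right equated case, because membership in a finitely generated right congruence is described not by a single factorisation but by a $W$-sequence as in Proposition~\ref{prop:gen}. Suppose $\mathbf{r}(a)$ is generated as a right congruence by the finite set $W=\{(u_j,v_j)\}$, with $a\in S$, and set $W'=\{(\theta(u_j),\theta(v_j))\}$. First, $au_j=av_j$ forces $a\theta(u_j)=a\theta(v_j)$, so $W'$ lies in the right annihilator congruence of $a$ computed in $S$. Now for $p,q\in S$ with $ap=aq$, Proposition~\ref{prop:gen} supplies a $W$-sequence from $p$ to $q$ in $M$; applying $\theta$ term by term produces a sequence in $S$, and the point to verify is that this is a genuine $W'$-sequence, i.e. that $\theta$ sends $W\cup W^{-1}$ into $W'\cup (W')^{-1}$ and sends each multiplier $t_i$ into $S$. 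Both hold, so $p$ and $q$ are related by the right congruence on $S$ generated by $W'$, giving finite generation of the annihilator congruence in $S$. Finally, weak right coherence transfers by combining the two cases just treated with Theorem~\ref{thm:weakcoherent}, since that result identifies weak right coherence with the conjunction of the right ideal Howson and finitely right equated properties.
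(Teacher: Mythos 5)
Your proof is correct. Note that the paper itself gives no proof of this statement — it is quoted from \cite{DGM24}, \cite{GH} and \cite{GHS} — so there is no internal argument to compare against; your approach (transporting finite generating sets along the retraction $\theta$, checking that $\theta$ maps each witness set $\mathbf{R}(a,b)$, $aM\cap bM$, $\mathbf{r}(a)$ into its $S$-analogue, and pushing $W$-sequences through $\theta$ termwise for the right-congruence case) is exactly the standard argument used in those references, with the weakly right coherent case correctly reduced to the other two via Theorem~\ref{thm:weakcoherent}.
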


We note that the term `finitely right aligned' used elsewhere in the literature coincides with the property of being right ideal Howson for \emph{monoids}. (Right ideal Howson \emph{semigroups} need not be finitely aligned, however \cite{CG21}.)

For right abundant monoids, the notions of weak right coherence and right ideal Howson coincide:\begin{proposition}
\label{prop:rabundweak}
\cite[Lemma 3.5]{G}
Every right abundant monoid is finitely right  equated.
\end{proposition}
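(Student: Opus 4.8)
The plan is to reduce the problem to the case of an idempotent and then exhibit a single generating pair. First I would exploit right abundance directly: given $a \in M$, the $\mathcal{L}^*$-class of $a$ contains an idempotent $e$, so that $a \,\mathcal{L}^*\, e$ and hence $\mathbf{r}(a) = \mathbf{r}(e)$ by the very definition of $\mathcal{L}^*$. Thus it suffices to show that $\mathbf{r}(e)$ is a finitely generated right congruence whenever $e$ is an idempotent.

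For this I claim that $\mathbf{r}(e)$ is generated by the single pair $(e,1)$; that is, $\mathbf{r}(e) = \rho_{\{(e,1)\}}$. The inclusion $\rho_{\{(e,1)\}} \subseteq \mathbf{r}(e)$ is immediate: since $e^2 = e = e\cdot 1$, the pair $(e,1)$ lies in $\mathbf{r}(e)$, and as $\mathbf{r}(e)$ is a right congruence it must contain the smallest right congruence generated by $(e,1)$.

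For the reverse inclusion I would take any $(u,v) \in \mathbf{r}(e)$, so that $eu = ev$, and multiply the generating pair $(e,1)$ on the right by $u$ and by $v$ respectively. By the description of generated right congruences in Proposition~\ref{prop:gen} this gives $eu \,\rho_{\{(e,1)\}}\, u$ and $ev \,\rho_{\{(e,1)\}}\, v$. Since $eu = ev$, transitivity yields $u \,\rho_{\{(e,1)\}}\, v$, as required. Hence $\mathbf{r}(a) = \mathbf{r}(e) = \rho_{\{(e,1)\}}$ is finitely (indeed singly) generated, so $M$ is finitely right equated.

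I do not anticipate a serious obstacle: the only slightly delicate point is justifying the passage from $a$ to the idempotent $e$, which is precisely where right abundance enters and which works only because $\mathcal{L}^*$ is defined via equality of right annihilator congruences rather than via a coarser relation such as $\widetilde{\mathcal{L}}_E$. The remaining verification involving $(e,1)$ is routine once one recalls the form of $W$-sequences for a one-element $W$.
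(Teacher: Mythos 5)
Your proof is correct and is exactly the standard argument behind the cited result \cite[Lemma 3.5]{G} (the paper itself only cites this lemma rather than reproving it): right abundance supplies an idempotent $e$ with $\mathbf{r}(a)=\mathbf{r}(e)$, and $\mathbf{r}(e)$ is generated as a right congruence by the single pair $(e,1)$, with the $W$-sequence verification you give being the right one. No gaps.
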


\section{Properties of subset expansions}

We recall briefly that an expansion of monoids is a functor $\mathcal{F}$ from the category of monoids to the category of monoids such that there is a natural transformation $\rho$ from $\mathcal{F}$ to the identity functor, with surjective components, that is: for any two monoids $M, N$ there exist surjective morphisms $\rho_M: \mathcal{F}(M) \rightarrow M$ and $\rho_N: \mathcal{F}(N) \rightarrow N$ such that for any morphism $\varphi: M \rightarrow N$, the following diagram commutes:
\[ \begin{tikzcd}
\mathcal{F}(M) \arrow{r}{\mathcal{F}(\varphi)} \arrow[swap]{d}{\rho_M} & \mathcal{F}(N)\arrow{d}{\rho_N} \\
M\arrow{r}{\varphi}& N
\end{tikzcd}
\]
As mentioned in the introduction $\mathcal{S}: M \rightarrow \mathcal{S}(M)$ is an expansion of monoids, as is easily seen by considering the surjection $(A,a) \mapsto a$ onto the second co-ordinate. We now record some basic facts about the structure of $\mathcal{S}(M) := \mathcal{P}(M) \rtimes M$

The following notational conventions will be useful.

\begin{nota}
For $a \in M$ and $A \subseteq M$ we write $a^{-1}A$ and $\hat{A}$ to denote the sets 
\begin{eqnarray*}
a^{-1}A&:=&\{u \in M: au \in A\}\\
\hat{A}&:=&\{e \in E(M): eA \subseteq A\}.
\end{eqnarray*}
\end{nota}

\begin{remark}
\label{rem:inclusions}
For $a \in M$ and $X,Y \subseteq M$, $aY \subseteq X$ if and only if $Y \subseteq a^{-1}X$. In particular for any $X \subseteq M$ we have:
\begin{eqnarray}
\label{eq:a-a}X \subseteq a^{-1}(aX) &=& \{u: u \in M, au  \in aX\},\\
\label{eq:aa-}a(a^{-1}X) &=& \{au: u \in M, au \in X\}  = aM \cap X \subseteq X.
\end{eqnarray}

\begin{enumerate}
\item It follows from \eqref{eq:a-a} that $a^{-1}(aX) = X$ for all $a \in M$ and all $X \subseteq M$ if and only if $M$ is left cancellative. (In general the inclusion $\{b\} \subseteq a^{-1}(\{ab\})$ can be strict.)
\item It follows from \eqref{eq:aa-} that $a(a^{-1}X) = X$ for all $a \in M$ and all $X \subseteq M$ if and only if $M$ is a group. (In general the inclusion $a(a^{-1}M) = aM \subseteq M$ can be strict.)
\end{enumerate}
\end{remark}

\begin{lemma}
\label{lem:Sreg}
Let $M$ be a monoid.
\begin{enumerate}
\item The set of idempotents of $\mathcal{S}(M)$ is $\{(A,e): e \in \hat{A}\}$ and this is a semilattice precisely if $1$ is the unique idempotent of $M$. In general, $\mathcal{E}:= \{(A,1): A \subseteq M\}$ is a semilattice of idempotents.
\item The monoid $\mathcal{S}(M)$ is regular if and only if $M$ is a group, in which case $\mathcal{S}(M)$ is a proper inverse monoid (and hence, in particular, weakly left and right coherent).
\end{enumerate}
\end{lemma}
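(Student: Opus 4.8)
The plan is to treat the two parts in turn, with almost all of the genuine content lying in the converse direction of part (2).

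For part (1) I would first read off the idempotents directly from the multiplication rule. Since $(A,e)(A,e) = (A \cup eA, e^2)$, the element $(A,e)$ is idempotent precisely when $e^2 = e$ and $eA \subseteq A$, that is, when $e \in E(M)$ and $e \in \hat A$; this is the claimed description. To decide when these idempotents form a semilattice I would test commutativity, computing $(A,e)(B,f) = (A\cup eB, ef)$ against $(B,f)(A,e) = (B \cup fA, fe)$. If $1$ is the only idempotent of $M$, then every idempotent of $\mathcal S(M)$ has the form $(A,1)$, the product collapses to $(A\cup B,1)$, and the idempotents coincide with $\mathcal E$, which is visibly a semilattice isomorphic to $(\mathcal P(M),\cup)$; this simultaneously settles the final sentence of (1). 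Conversely, if $M$ carries an idempotent $e \neq 1$, I would exhibit two idempotents that fail to commute, for instance $(\emptyset,e)$ and $(\{1\},1)$, whose products in the two orders are $(\{e\},e)$ and $(\{1\},e)$, and these differ as $e \neq 1$. This step is routine once the test element $(\{1\},1)$ is chosen.

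For the forward direction of part (2), assuming $M$ is a group $G$, I would produce an explicit inverse for each $(X,x)$, namely $(x^{-1}X, x^{-1})$, and verify $(X,x)(x^{-1}X,x^{-1})(X,x) = (X,x)$ and its mirror by direct computation, the essential simplifications being $x(x^{-1}X) = X$ and $x^{-1}x = 1$. Since a group has $1$ as its unique idempotent, part (1) shows the idempotents of $\mathcal S(G)$ form a semilattice, so $\mathcal S(G)$ is regular with commuting idempotents, hence inverse. To see it is proper I would identify the two relations in the definition: the projection $(X,x)\mapsto x$ onto $G$ has kernel $\{((X,x),(Y,y)) : x=y\}$, and I would argue this kernel is exactly $\sigma$ by noting that any congruence collapsing the idempotents $(A,1)$ already identifies $(X,x) = (X,1)(\emptyset,x)$ with $(Y,x)=(Y,1)(\emptyset,x)$; meanwhile $(X,x)\,\mathcal L\,(Y,y)$ reduces via $(X,x)^{-1}(X,x) = (x^{-1}X,1)$ to $x^{-1}X = y^{-1}Y$. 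Intersecting the two, $x=y$ together with $x^{-1}X = x^{-1}Y$ forces $X=Y$, so $\mathcal L \cap \sigma$ is trivial; weak left and right coherence then follow from the cited fact about inverse monoids.

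The converse of part (2) — that regularity of $\mathcal S(M)$ forces $M$ to be a group — is where the real work lies, and the crux is choosing an element whose regularity is restrictive. I would test $(\{1\},m)$ for arbitrary $m \in M$. Expanding $(\{1\},m)(Y,y)(\{1\},m)$ and equating it with $(\{1\},m)$ produces, among the set-inclusion constraints, the term $myX = \{my\}\subseteq\{1\}$, which as a containment of singletons forces the exact equation $my = 1$; conversely, when $my = 1$ the element $(\emptyset,y)$ serves as a pre-inverse. Hence $(\{1\},m)$ is regular in $\mathcal S(M)$ if and only if $m$ has a right inverse in $M$. Granting $\mathcal S(M)$ regular, every $m$ then has a right inverse, and I would conclude with the standard fact that a monoid in which every element is right invertible is a group: if $my=1$ and $yz=1$, then $z = (my)z = m(yz) = m$, whence $ym = yz = 1$ and $y$ is a two-sided inverse. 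I expect the main obstacle to be precisely locating the test element $(\{1\},m)$, since the more obvious candidate $(\emptyset,m)$ only detects von Neumann regularity of $m$ inside $M$ and does not force invertibility; the nonempty first coordinate is what couples the set conditions to the exact equation $my = 1$.
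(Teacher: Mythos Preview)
Your proposal is correct and follows essentially the same approach as the paper, including the crucial choice of the test element $(\{1\},m)$ for the converse in part~(2). The only noteworthy difference is your counterexample for the converse in part~(1): you exhibit the non-commuting pair $(\emptyset,e)$ and $(\{1\},1)$ directly, whereas the paper argues by first forcing $E(M)$ to commute and then, via $(\{e\},e)(\emptyset,f) = (\emptyset,f)(\{e\},e)$ and its dual, deducing $e=f$ for any two idempotents; your route is slightly more economical.
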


\begin{proof}
(1) The idempotents of $\mathcal{S}(M)$ are easily seen to be the elements of the form $(A,e)$ where $e$ is an idempotent of $M$ and $A \subseteq M$ satisfies $eA \subseteq A$. In particular $(\emptyset, 1)$ is the identity element of $\mathcal{S}(M)$, where $1$ is the identity element of $M$. If $(A,e)$ and $(B,f)$ are idempotents of $\mathcal{S}(M)$, then $(A,e)(B,f) = (A \cup eB, ef)$ and $(B,f)(A,e) = (B \cup fA, fe)$ need not be idempotent, and need not be equal.  If $1$ is the only idempotent of $M$, then $\mathcal{E}$ is the set of all idempotents of $\mathcal{S}(M)$ and, in any case, this is easily seen to be a semilattice since $(A,1)(B,1) = (A \cup B,1) = (B \cup A, 1) = (B,1)(A,1)$. Conversely, suppose that the set of idempotents of $\mathcal{S}(M)$ is a semilattice. For any $e,f \in E(M)$ note that $(\emptyset,e)$ and $(\emptyset,f)$ are idempotents, and since these commute we must have $ef=fe$. Thus $E(M)$ must be a semilattice. Next, $(\{ e\},ef)=(\{ e\},e)(\emptyset,f)=(\emptyset,f)(\{ e\},e)=(\{ fe\},fe)$. Dually, $(\{ f\},fe)=(\{ ef\},ef)$ so that since $fe=ef$ 
we obtain that $e=f$.

(2) Suppose $\mathcal{S}(M)$ is regular and let $x \in M$. By assumption there exists  $(Y,y) \in \mathcal{S}(M)$ with 
$$(\{1\},x)=(\{1\},x)(Y, y)(\{1\},x) = (\{1, xy\} \cup xY,xyx).$$
Thus $xy=1$ and as this holds for {\em every} $x\in M$ we deduce that $M$ is a group. Conversely, if $M$ is a group, then it is easy to see that $\mathcal{S}(M)$ has a particularly nice structure; it is an inverse monoid with $(X,x)^{-1} = (x^{-1}X, x^{-1})$. Further, since $\mathcal{S}(M)$ is a semidirect product of a semilattice by a group it follows from classical results that it is proper \cite{H}. It is instructive to see how this transpires: for any $(A,a), (B,b)\in \mathcal{S}(M)$ we have that
$(A,a)\,\sigma\,  (B,b)$ if and only if $a=b$, $(A,a)\,\mathcal{R}\,  (B,b)$ if and only if $A=B$ and  $(A,a)\,\mathcal{L}\,  (B,b)$ if and only if $a^{-1}A=b^{-1}B$. Finally, since $\mathcal{S}(M)$ is inverse, it is weakly coherent by Proposition \ref{prop:rabundweak}.
\end{proof}

We next consider the situations in which $\mathcal{S}(M)$ lies in the class of left abundant,  left $\mathcal{E}$-Ehresmann, left $\mathcal{E}$-restriction or left $\mathcal{E}$-ample monoids.

\begin{proposition}
\label{prop:Rrel}
Let $M$ be a monoid,  $\mathcal{E} = \{(F,1): F\subseteq M\}$ and $(A,a), (B,b) \in \mathcal{S}(M)$.
\begin{enumerate}[label=\textnormal{(\arabic*)}]
\item $(A, a) \,\mathcal{R}^*\, (B,b)$ if and only if $A=B$ and $a \, \mathcal{R}^*\,  b$. 
\item $(A, a) \,\widetilde{\mathcal{R}}\, (B,b)$ if and only if $A=B$ and $a \, \widetilde{\mathcal{R}}_{\hat{A}}\, b$.
\item $(A, a) \,\widetilde{\mathcal{R}}_{\mathcal{E}} \, 
(B,b)$ if and only if $A=B$.
\item $\mathcal{R}^* = \widetilde{\mathcal{R}}_{\mathcal{E}}$ in $\mathcal{S}(M)$ if and only if $M$ is right cancellative.
\item $\mathcal{S}(M)$ is left abundant if and only if for each $A \subseteq M$ and $a \in M$ there exists an idempotent $e \in \hat{A}$ with $a\, \mathcal{R}^* \,e$.
\item $\mathcal{S}(M)$ is always a proper left  $\mathcal{E}$-restriction monoid and so certainly left $\mathcal{E}$-Ehresmann;  it is  left $\mathcal{E}$-ample if and only if $M$ is right cancellative, in which case $\mathcal{E}=E(\mathcal{S}(M))$.
\end{enumerate} 
\end{proposition}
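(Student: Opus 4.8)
The plan is to treat parts (1)--(3) as the computational core, from which (4) and (5) fall out formally, and then to assemble (6) from (1), (3), (4) together with two direct verifications. Throughout I would use the standard characterisation of $\mathcal{R}^*$ (for $s,t$ in a monoid, $s\,\mathcal{R}^*\,t$ if and only if for all $u,v$ one has $us=vs \Leftrightarrow ut=vt$) and the dual reading of $\widetilde{\mathcal{R}}_E$, namely that $s\,\widetilde{\mathcal{R}}_E\,t$ exactly when $s$ and $t$ have the same left identities drawn from $E$. The governing principle is that each relation on $\mathcal{S}(M)$ is pinned down by feeding it carefully chosen \emph{test elements} (for $\mathcal{R}^*$) or \emph{test idempotents} (for the $\widetilde{\mathcal{R}}$-type relations) whose set-component is tuned to neutralise the union appearing in the product $(X,x)(Y,y)=(X\cup xY,xy)$.

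For part (1): to force $A=B$ I would test with second coordinate $1$, since $(U,1)(A,a)=(U\cup A,a)$, so that $(U,1)(A,a)=(V,1)(A,a)$ reads simply $U\cup A=V\cup A$; comparing with the same statement for $(B,b)$ and specialising $(U,V)$ to $(A,\emptyset)$ and to $(B,\emptyset)$ yields $A\subseteq B$ and $B\subseteq A$. Granted $A=B$, the key trick is to choose the test pair $(U,u)=(vA,u)$, $(V,v)=(uA,v)$, for which the set-components of both products automatically coincide ($U\cup uA=uA\cup vA=V\cup vA$); the product equations then collapse to $ua=va$ on one side and $ub=vb$ on the other, delivering $a\,\mathcal{R}^*\,b$. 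The converse is immediate since the set-components match identically. For parts (2) and (3) I would compute the left identities of $(A,a)$ among the idempotents $(F,e)$ (those with $e\in\hat F$): such an element fixes $(A,a)$ on the left precisely when $F\cup eA=A$ and $ea=a$. Testing against the idempotents $(F,1)\in\mathcal{E}$ (which fix $(A,a)$ iff $F\subseteq A$) isolates $A=B$ and proves (3) outright; testing against the idempotents $(A,e)$ with $e\in\hat A$ (which fix $(A,a)$ iff $ea=a$) then extracts $a\,\widetilde{\mathcal{R}}_{\hat A}\,b$, and conversely any left-identity idempotent $(F,e)$ of $(A,a)$ automatically has $e\in\hat A$ and $ea=a$, so $a\,\widetilde{\mathcal{R}}_{\hat A}\,b$ transports it to a left identity of $(B,b)=(A,b)$.

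Parts (4) and (5) I expect to be formal consequences. By (1) and (3) we always have $\mathcal{R}^*\subseteq\widetilde{\mathcal{R}}_\mathcal{E}$, and equality amounts to the implication $A=B\Rightarrow a\,\mathcal{R}^*\,b$; taking $A=B=\emptyset$ this says every pair of elements of $M$ is $\mathcal{R}^*$-related, equivalently every element is $\mathcal{R}^*$-related to $1$, equivalently (since $1\,\mathcal{R}^*\,a$ says exactly that $a$ is right cancellable) that $M$ is right cancellative; this gives (4). For (5), left abundance means each $\mathcal{R}^*$-class meets $E(\mathcal{S}(M))$, and by (1) the class of $(A,a)$ consists of the pairs $(A,b)$ with $b\,\mathcal{R}^*\,a$, so it contains an idempotent if and only if some $e\in\hat A$ satisfies $a\,\mathcal{R}^*\,e$.

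Finally, part (6) assembles these. Left $\mathcal{E}$-Ehresmannness follows from (3): each $\widetilde{\mathcal{R}}_\mathcal{E}$-class $\{(A,x):x\in M\}$ meets the semilattice $\mathcal{E}$ in the single element $(A,1)$, and $\widetilde{\mathcal{R}}_\mathcal{E}$ is a left congruence because $A=B$ forces $C\cup cA=C\cup cB$. The left ample identity is a one-line check: with $(A,a)^+=(A,1)$ one computes $(A,a)(F,1)=(A\cup aF,a)$ and $\bigl((A,a)(F,1)\bigr)^+(A,a)=(A\cup aF,1)(A,a)=(A\cup aF,a)$. For properness I would identify $\sigma_\mathcal{E}$ as the kernel of the projection $(A,a)\mapsto a$: it is contained in this kernel since the projection collapses $\mathcal{E}$, and it contains it because $(A,1)\,\sigma_\mathcal{E}\,(\emptyset,1)$ multiplied on the right by $(\emptyset,a)$ gives $(A,a)\,\sigma_\mathcal{E}\,(\emptyset,a)$; hence $(A,a)\,\sigma_\mathcal{E}\,(B,b)$ iff $a=b$, and intersecting with $\widetilde{\mathcal{R}}_\mathcal{E}$ (i.e.\ $A=B$) leaves only equality, so $\mathcal{S}(M)$ is always proper left $\mathcal{E}$-restriction. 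Since left $\mathcal{E}$-ampleness adds to this exactly the demand $\widetilde{\mathcal{R}}_\mathcal{E}=\mathcal{R}^*$, part (4) shows it holds iff $M$ is right cancellative. I expect the main obstacle to be the forward directions of (1) and (2): guessing the test elements and idempotents whose set-components cancel the union in the semidirect product, rather than any subsequent step.
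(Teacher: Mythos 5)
Your proposal is correct and, for the most part, follows the same route as the paper: parts (1)--(3) by direct computation with test elements and test idempotents, (4) and (5) as formal consequences of (1) and (3), and (6) assembled from (3) and (4) together with the same two verifications (the left ample identity $(A,a)(F,1)=(A\cup aF,1)(A,a)$ and the observation that $\sigma_{\mathcal{E}}$ is contained in the kernel of the projection onto the second coordinate, so that $\widetilde{\mathcal{R}}_{\mathcal{E}}\cap\sigma_{\mathcal{E}}$ is trivial). The one genuine difference is in the forward direction of (1), and there your version is the more careful one. The paper tests with $(\emptyset,u)$ and $(\emptyset,v)$, asserting that $ua=va$ gives $(\emptyset,u)(A,a)=(\emptyset,v)(A,a)$; as literally written this also requires $uA=vA$, which does not follow from $ua=va$ (in the two-element semilattice $\{1,0\}$ take $u=1$, $v=a=0$, $A=\{1\}$: then $ua=va=0$ but $uA=\{1\}\neq\{0\}=vA$). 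Your choice of test pair $(vA,u)$, $(uA,v)$ makes the set-components of both products coincide automatically, so the equality of products genuinely collapses to the second coordinate on each side; this is exactly the repair the paper's computation needs, and the rest of your argument (deriving $A=B$ from the pairs $(A,1)$, $(\emptyset,1)$; identifying the idempotent left identities $(F,e)$ of $(A,a)$ as those with $F\cup eA=A$ and $ea=a$; reducing (4) to the statement that all elements of $M$ are $\mathcal{R}^*$-related to $1$) matches the paper step for step.
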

\begin{proof}
(1) Suppose first that $(A,a) \, \mathcal{R}^* \, (B,b)$. Since $(A,1)(A,a) = (\emptyset, 1)(A,a)$ and $(B,1)(B,b) = (\emptyset, 1)(B,b)$ it follows that $A=A\cup B=B$. If $ua=va$ then $(\emptyset,u)(A,a) = (\emptyset, v)(A,a)$ giving $(\emptyset,u)(B,b) = (\emptyset, v)(B,b)$, and hence $ub=  vb$. A dual argument shows that if $ub=vb$ then $ua=va$. Thus $a \, \mathcal{R}^*\,b$. Conversely, if $A=B$ and  $a \, \mathcal{R}^* \,b$ then it is easy to see that $(U,u)(A,a) = (V,v)(A,a)$, if and only if $(U,u)(B,b) = (V,v)(B,b)$.

(2) By definition $(A,a)$ and $(B,b)$ are $\widetilde{\mathcal{R}}$-related if and only if they have the same set of idempotent left identities. Suppose first that $(A, a)\, \widetilde{\mathcal{R}} \, (B,b)$. Since $(A,1)(A,a) = (A, a)$, it follows that we must also have $(A,1)(B,b) = (B,b)$ giving $A \subseteq B$; by a left-right dual argument we also have $B\subseteq A$, giving $A=B$ and hence $(A,a) \,\widetilde{\mathcal{R}}\,  (A,b)$ Consider now an idempotent of the form $(A,e)$ where $e \in \hat{A}$. Note that $(A, e)$ is a left identity of $(A,a)$ if and only if $ea = a$. Since $(A,a)\, \widetilde{\mathcal{R}}\,  (A,b)$ it then follows that each idempotent left identity of $a$ lying in $\hat{A}$ must be an idempotent left identity of $b$, and vice versa, giving that $a\,\widetilde{\mathcal{R}}_{\hat{A}}\,  b$. Conversely, suppose that $A=B$ and $a \,\widetilde{\mathcal{R}}_{\hat{A}}\, b$.  Let $(E,e)$ be an idempotent satisfying $(E,e)(A,a) =  (A,a)$, that is, $E\cup eA = A$ and $ea=a$. Since $eA \subseteq A$ we have $e \in \hat{A}$, and so $eb=b$. Now $(E,e)(A,b) = (E\cup eA, eb) = (A,b)$, as required.

(3) Restricting attention to the idempotents of $\mathcal{E}$, it is clear that $(E,1)$ is a left identity for $(A,a)$ if and only if $E \subseteq A$, and hence the set of idempotents of $\mathcal{E}$ acting identically on the left of $(A,a)$ will be equal to the corresponding set of idempotents for $(B,b)$ if and only if $A=B$.

(4)  Let $a \in M$ and consider $(\{1\}, a) \in \mathcal{S}(M)$. It is easy to see that the idempotent left identities of $(\{1\}, a)$ are  $(\emptyset, 1)$ and $(\{1\}, 1)$. Thus for all $a \in M$ we have $(\{1\}, a) \,\widetilde{\mathcal{R}}_{\mathcal{E}}\,  (\{1\}, 1)$. If $\mathcal{R}^* = \widetilde{\mathcal{R}}_\mathcal{E}$ it then follows from part (1) that $a \, \mathcal{R}^* \, 1$ for all $a \in M$. Thus $ua=va$ if and only if $u=v$, that is, $M$ is right cancellative. Conversely, if $M$ is right cancellative, then 
$E(M)=\{ 1\}$ so that $\mathcal{E}=E(\mathcal{S}(M))$ and $a\, \mathcal{R}^*\,  b$ holds for all $a,b \in M$. From parts (1) and (3) we have $\mathcal{R}^* = \widetilde{\mathcal{R}}=\widetilde{\mathcal{R}}_{\mathcal{E}}$.

(5) By part (1) the $\mathcal{R}^*$-class of $(A,a)$ contains an idempotent $(E,e)$ if and only if $A=E$, $e \in \hat{A}$ and $a \, \mathcal{R}^* \, e$.

(6) Let $(T,t) \in \mathcal{S}(M)$ and note  $(T,t) (A,a) = (T \cup tA, ta)$. From (3), if $(A,a) \,{\mathcal{R}}_{\mathcal{E}}\,  (B,b)$ then $A=B$ and hence also $(T,t) (A,a) \,\widetilde{\mathcal{R}}_{\mathcal{E}}\,  (T,t) (B,b)$, so that $\widetilde{\mathcal{R}}_{\mathcal{E}}$ is a left congruence. It is also clear that $(A,1)$ is the unique idempotent of $\mathcal{E}$ lying in the $\tilde{\mathcal{R}}_{\mathcal{E}}$-class of $(A,a)$, thus $\mathcal{S}(M)$ is left $\mathcal{E}$-Ehresmann. Writing $(A,a)^+ =(A,1)$ we see that $\mathcal{E}=\{(A,a)^+: (A,a) \in \mathcal{S}(M)\}$ and it is straightforward to check that the left ample identity holds:
\begin{eqnarray*}
(A,a)(B,b)^+ &=& (A,a)(B,1) = (A \cup aB,a) = (A \cup aB,1)(A,a)\\ &=& (A \cup aB,ab)^+(A,a) = 
((A,a)(B,b))^+(A,a). 
\end{eqnarray*}
Thus $\mathcal{S}(M)$ is left $\mathcal{E}$-restriction. Recall that $\sigma_{\mathcal{E}}$ is the least congruence identifying all idempotents of $\mathcal{E}$. Since all idempotents in $\mathcal{E}$ have second component $1$, it is easy to see that if $(A,a) \,\sigma_{\mathcal{E}}\,  (B,b)$ then we must have $a=b$. From part (3) we also have that  $(A,a)\,\widetilde{\mathcal{R}}_{\mathcal{E}}\, (B,b)$ if and only if $A=B$. Thus $\widetilde{\mathcal{R}}_{\mathcal{E}} \cap \sigma_{\mathcal{E}}$ is trivial and hence $\mathcal{S}(M)$ is proper left $\mathcal{E}$-restriction. By part (4) we have seen that $\widetilde{\mathcal{R}}_\mathcal{E} = \mathcal{R}^*$ if and only if $M$ is right cancellative; thus $\mathcal{S}(M)$ is proper left $\mathcal{E}$-ample if and only if $M$ is right cancellative.
\end{proof}

By Proposition \ref{prop:Rrel}, left abundance of $M$ is a necessary condition for $\mathcal{S}(M)$ to be left abundant. Likewise, by the left-right dual to Theorem \ref{thm:retract}, $M$ being finitely left equated is a necessary condition for $\mathcal{S}(M)$ to be finitely left equated. Neither condition is sufficient however, as we shall see in Example \ref{ex:labund}.

We now consider the situations in which $\mathcal{S}(M)$ lies in the class of right abundant, right $\mathcal{E}$-Ehresmann, right $\mathcal{E}$-restriction or right $\mathcal{E}$-ample monoids.

\begin{proposition}
\label{prop:Lrel}
Let $M$ be a monoid,  $\mathcal{E} = \{(E,1): E \subseteq M\}$ and $(A,a), (B,b) \in \mathcal{S}(M)$.
\begin{enumerate}[label=\textnormal{(\arabic*)}]
\item $(A, a) \,\widetilde{\mathcal{L}} \, (B,b)$ if and only if $a^{-1}A=b^{-1}B$ and $a \,\widetilde{\mathcal{L}}\,  b$.
\item $(A, a) \,\mathcal{L}^* \, (B,b)$ if and only if $a^{-1}A=b^{-1}B$ and $a \,\mathcal{L}^*\,  b$.

\item $(A, a) \,\widetilde{\mathcal{L}}_{\mathcal{E}} \,
(B,b)$ if and only if $a^{-1}A=b^{-1}B$.
\item $\mathcal{L}^* = \widetilde{\mathcal{L}}_{\mathcal{E}}$ in $\mathcal{S}(M)$ if and only if $M$ is left cancellative.
\item $\mathcal{S}(M)$ is right abundant if and only if $M$ is right abundant.
\item Each $\tilde{\mathcal{L}}_{\mathcal{E}}$--class of $\mathcal{S}(M)$ contains a unique idempotent from $\mathcal{E}$, and $\mathcal{S}(M)$ is a right $\mathcal{E}$-Ehresmann monoid (or right $\mathcal{E}$-adequate) if and only if $M$ is left cancellative,
in which case $\mathcal{E}=E(\mathcal{S}(M))$. 
\item  $\mathcal{S}(M)$ is  right ample if and only if $M$ is a group (in which case $\mathcal{S}(M)$ is a proper inverse monoid).
\end{enumerate} 
\end{proposition}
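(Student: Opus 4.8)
The plan is to establish the seven parts in order, treating (1)--(3) as direct computations with the product $(A,a)(B,b)=(A\cup aB,ab)$ and then feeding them into the structural statements (4)--(7). Throughout, the workhorse identities are $a(a^{-1}X)=aM\cap X\subseteq X$ from \eqref{eq:aa-} and the description of idempotents from Lemma~\ref{lem:Sreg}. For (1) and (2) I would first record that an idempotent $(E,e)$ (so $e\in\hat E$) is a right identity of $(A,a)$ exactly when $aE\subseteq A$, i.e. $E\subseteq a^{-1}A$, and $ae=a$. For (1), the largest such idempotent in $\mathcal E$ is $(a^{-1}A,1)$, so equality of the $\mathcal E$-right-identities already forces $a^{-1}A=b^{-1}B$ (this is precisely (3)); then for a general $e\in E(M)$ with $ae=a$ one checks $e\in\widehat{a^{-1}A}$, so $(a^{-1}A,e)$ is an idempotent right identity of $(A,a)$, and $ae=a\Leftrightarrow be=b$ transfers to give $a\,\widetilde{\mathcal L}\,b$ in $M$; the converse simply reverses these implications, using $b(a^{-1}A)=bM\cap B\subseteq B$. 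Part (3) is the same computation restricted to $\mathcal E$.

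For (2) I would compute $\mathbf r((A,a))=\{((U,u),(V,v)):A\cup aU=A\cup aV,\ au=av\}$ and isolate the two conditions using well-chosen test pairs: $((\emptyset,u),(\emptyset,v))$ lies in $\mathbf r((A,a))$ iff $au=av$, extracting $a\,\mathcal L^*\,b$, while $((U,1),(\emptyset,1))$ lies in it iff $U\subseteq a^{-1}A$, extracting $a^{-1}A=b^{-1}B$. Conversely, given both conditions, the point is that $A\cup aU=A\cup aV$ transfers to $B\cup bU=B\cup bV$: each $u\in U$ has $au\in A$ (so $bu\in B$ since $u\in a^{-1}A=b^{-1}B$) or $au=av'$ for some $v'\in V$ (so $bu=bv'\in bV$ since $\mathbf r(a)=\mathbf r(b)$). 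Parts (4) and (5) then fall out. For (4), comparing (2) and (3) shows $\mathcal L^*=\widetilde{\mathcal L}_{\mathcal E}$ iff $a^{-1}A=b^{-1}B$ always forces $a\,\mathcal L^*\,b$; left cancellativity makes $\mathbf r(a)$ trivial for every $a$, hence $\mathcal L^*$ universal on $M$, giving equality, while otherwise $(M,a)$ versus $(M,1)$ (both with $a^{-1}M=M$) is a $\widetilde{\mathcal L}_{\mathcal E}$-pair that is not $\mathcal L^*$-related. For (5), right abundance of $\mathcal S(M)$ applied to $(\emptyset,a)$ produces an idempotent $(E,e)$ with $a\,\mathcal L^*\,e$ and $e\in E(M)$ by (2); conversely, given $e\in E(M)$ with $a\,\mathcal L^*\,e$, I would set $E:=e(a^{-1}A)$, check $eE=E$ (so $(E,e)$ is idempotent) and $e^{-1}E=a^{-1}(aM\cap A)=a^{-1}A$ using $\mathbf r(a)=\mathbf r(e)$, so $(A,a)\,\mathcal L^*\,(E,e)$ by (2).

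For (6), existence and uniqueness of the $\mathcal E$-idempotent $(a^{-1}A,1)$ in each class is immediate from (3), so the substance is whether $\widetilde{\mathcal L}_{\mathcal E}$ is a right congruence. If $M$ is left cancellative then by (4) it coincides with $\mathcal L^*$, which is always a right congruence, giving right $\mathcal E$-Ehresmann and in fact right $\mathcal E$-adequate; moreover left cancellativity forces $E(M)=\{1\}$, whence $\mathcal E=E(\mathcal S(M))$. The converse is where I expect the main difficulty, since I must manufacture a failure of the right congruence property from a failure of left cancellativity. The delicate point is choosing the right multiplier: multiplying by $(\emptyset,s)$ never separates $\widetilde{\mathcal L}_{\mathcal E}$-related elements, so one needs a nonempty set. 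Taking $a,p,q$ with $ap=aq$ and $p\neq q$, I would observe $(\emptyset,a)\,\widetilde{\mathcal L}_{\mathcal E}\,(\emptyset,1)$ and right-multiply by $(\{p\},1)$, obtaining $(\{ap\},a)$ and $(\{p\},1)$, whose $\mathcal E$-idempotents $(a^{-1}\{ap\},1)$ and $(\{p\},1)$ differ because $q\in a^{-1}\{ap\}\setminus\{p\}$; hence the images are not $\widetilde{\mathcal L}_{\mathcal E}$-related.

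Finally, for (7), if $M$ is a group then $\mathcal S(M)$ is a proper inverse monoid by Lemma~\ref{lem:Sreg}, hence right ample. Conversely, right ampleness (with respect to $\mathcal E$) entails right $\mathcal E$-adequacy, so $M$ is left cancellative and $\mathcal E=E(\mathcal S(M))$ by (6). I would then write out the right ample identity $(E,1)(A,a)=(A,a)\bigl((E,1)(A,a)\bigr)^{*}$ and, using $a(a^{-1}X)=aM\cap X$, reduce it to $E\cup A=A\cup\bigl(aM\cap(E\cup A)\bigr)$, i.e. every element of $E\setminus A$ lies in $aM$; taking $A=\emptyset$ and $E=\{x\}$ for arbitrary $x$ and arbitrary $a$ forces $aM=M$ for all $a$, so every element is right invertible. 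Combined with left cancellativity this upgrades a right inverse to a two-sided one (if $aa'=1$ then $a(a'a)=a=a\cdot 1$ gives $a'a=1$), so $M$ is a group, and Lemma~\ref{lem:Sreg} supplies the parenthetical conclusion.
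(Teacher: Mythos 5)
Your proposal is correct and follows essentially the same route as the paper: the same characterisation of idempotent right identities and of $\mathbf{r}((A,a))$ drives parts (1)--(3), the same witnesses (e.g.\ $(\emptyset,a)$ versus $(\emptyset,1)$ multiplied by $(\{p\},1)$) break the right congruence property in (6), and the ample identity reduces to $a(a^{-1}X)=X$ in (7). The only notable local variations are that in (5) you take $E=e(a^{-1}A)$ rather than $a^{-1}A$, and in (6) you deduce that $\widetilde{\mathcal L}_{\mathcal E}$ is a right congruence directly from part (4) and the fact that $\mathcal L^*$ is always a right congruence, a slightly slicker shortcut than the paper's explicit computation of $(at)^{-1}(A\cup aT)$; both variants are valid.
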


\begin{proof} 
(1) Suppose first that $(A,a)\, \widetilde{\mathcal{L}} \, (B,b)$. Since $(A,a)(a^{-1}A,1) = (A,a)$ we must have $(B,b)(a^{-1}A, 1) = (B,b)$, which implies that $b(a^{-1}A) \subseteq B$, which in turn implies that $a^{-1}A \subseteq b^{-1}B$. Thus with the dual argument we have that $a^{-1}A = b^{-1}B$. Next, suppose that $ae=a$. Since $(A,a)(\emptyset, e) = (A,e)$ we also require that $(B,b)(\emptyset, e) = (B,b)$, that is $be=b$, and it follows that $a \,\widetilde{\mathcal{L}}\, b$.

Conversely, suppose that $a^{-1}A = b^{-1}B$ and $a \,\widetilde{\mathcal{L}}\, b$, and let $(E,e)$ be an idempotent with $(A,a)(E,e) = (A,a)$. Since $A \cup aE =A$, we have that $aE \subseteq A$, or equivalently  $E \subseteq a^{-1}A = b^{-1}B$, giving $bE \subseteq B$. Further, since
$ae=a$ and $a \,\widetilde{\mathcal{L}}\, b$, we have $be=b$. Hence $(B,b)(E,e) = (B,be)=(B,b)$ and it follows that  $(A,a)\, \widetilde{\mathcal{L}} \, (B,b)$.

(2) Suppose first that $(A,a)\, \mathcal{L}^* \, (B,b)$. Then 
$(A,a)\, \widetilde{\mathcal{L}} \, (B,b)$ so that by (1) we have $a^{-1}A = b^{-1}B$. Next, suppose that $ax=ay$. Since $(A,a)(\emptyset, x) = (A,a)(\emptyset, y)$ we also require that $(B,b)(\emptyset, x) = (B,b)(\emptyset, y)$, that is $bx=by$. 
Thus with the  dual argument we obtain $ a \,\mathcal{L}^*\, b$.

Conversely, suppose that $a^{-1}A = b^{-1}B$ and $a \,\mathcal{L}^*\, b$.  Let $(A,a)(X,x) = (A,a)(Y,y)$. Then $A \cup aX = A \cup aY$ and $ax=ay$. From the latter we immediately deduce that $bx=by$. Meanwhile, the former gives $aX \setminus A = aY \setminus A$. Let $p\in B\cup bX$; we show that $p\in B\cup bY$. Clearly this is true if $p\in B$, so we assume $p\notin B$. Then $p=bu$ for some $u\in X$. If $au\in A$ then $u\in a^{-1}A=b^{-1}B$ so that $p\in B$, a contradiction. Thus $au\in aX\setminus A=aY\setminus A$, giving $au=av$ and then $p=bu=bv$ for some $v\in Y$, since $a\,\mathcal{L}^*\, b$. Thus in this case also $p\in B\cup bY$. Together with the dual argument we obtain $B\cup bX=B\cup bY$ 
from which it follows that $(B, b)(X,x) = (B,b)(Y,y)$.

(3) By definition $(A,a)$ and $(B,b)$ are $\tilde{\mathcal{L}}_{\mathcal{E}}$-related if and only if they have the same set of idempotent right identities in $\mathcal{E}$. Since $(A,a)(E,1) = (A,a)$ if and only if $aE \subseteq A$, or equivalently, $E \subseteq a^{-1}A$, and it follows from this that $(A,a)$ and $(B,b)$ have the same set of right identities in $\mathcal{E}$ if and only if $a^{-1}A=b^{-1}B$.

(4) It is immediate from (2) and (3) that $\mathcal{L}^* = \mathcal{L}_{\mathcal{E}}$ if and only if all elements of $M$ are $\mathcal{L}^*$-related or, in other words, if and only if $M$ is left cancellative.

(5) It is immediate from part (2)  that $(A,a) \, \mathcal{L}^* \, (E,e)$ if and only if $a \,\mathcal{L}^* \,e$ and $e^{-1}E = a^{-1}A$. Since $(E,e)$ being idempotent implies that $e$ is idempotent, $\mathcal{S}(M)$ right abundant implies that $M$ is right abundant. Conversely, suppose that $M$ is right abundant. Then for all $a \in M$ there exists an idempotent $e \in M$ such that for all $u,v \in M$, $au=av$ if and only if $eu=ev$. Since $e^2 = e$ note that $e$ must be an idempotent right identity for $a$. Now taking $E=a^{-1}A$ it is straightforward to check that $a^{-1}A = e^{-1}E$ (since $u \in a^{-1}A$  if and only if $au=aeu \in A$ if and only if $eu \in a^{-1}A = E$ if and only if $u \in e^{-1}E$) and  $(E,e)$ is idempotent (since $eE = e(a^{-1}A) = e((ae)^{-1}A) =e(e^{-1}(a^{-1}A) )=  e(e^{-1}E)\subseteq E$). It then follows from part (2) that $(A,a) \, \mathcal{L}^* \, (E,e)$. 

(6) It follows from the above that $(a^{-1}A, 1)$ is the unique element of $\mathcal{E}$ that is $\widetilde{\mathcal{L}}_{\mathcal{E}}$-related to $(A,a)$. Thus $\mathcal{S}(M)$ will be right $\mathcal{E}$-Ehresmann if and only if $\widetilde{\mathcal{L}}_{\mathcal{E}}$ is a right congruence. We show that this is the case if and only if $M$ is left cancellative; in which case by part (4) we also have that $\mathcal{L}^* =\widetilde{\mathcal{L}}_\mathcal{E}$ giving that $\mathcal{S}(M)$ is also right $\mathcal{E}$-adequate. First, suppose that $M$ is left cancellative. Then it is straightforward to check that for all $a \in M$ and all $T \subseteq M$ we have $a^{-1}(aT) = T$. Now suppose that $(A,a)\,\widetilde{\mathcal{L}}_{\mathcal{E}}\,(B,b)$ and let $(T,t) \in \mathcal{S}(M)$. Then $(A,a)(T,t) = (A \cup aT, at)$, $(B,b)(T,t) = (B \cup bT, bt)$ and by (3) we have $a^{-1}A = b^{-1}B$. Thus
$$(at)^{-1}(A \cup aT) = t^{-1} (a^{-1}A \cup a^{-1}(aT)) = t^{-1} (a^{-1}A \cup T) = t^{-1} (b^{-1}B \cup b^{-1}(bT)) = (bt)^{-1}(B \cup bT),$$
and we have $(A,a)(T,t)\,\widetilde{\mathcal{L}}_{\mathcal{E}}\,(B,b)(T,t)$. On the other hand, if $M$ is not left cancellative, then there exist elements $a,t,u$ such that $at=au$ but $t \neq u$. By the previous part we have that $(\emptyset, a) \,\widetilde{\mathcal{L}}_{\mathcal{E}}\, (\emptyset, 1)$. However, $(\{at\}, a)=(\emptyset, a)(\{t\}, 1)$ is not $\widetilde{\mathcal{L}}_{\mathcal{E}}$-related to $(\{t\},1) = (\emptyset, 1)(\{t\}, 1)$ since $u\in a^{-1}\{ at\}$ but $u\notin 1^{-1}\{ t\}$.

(7) From the previous part, we may define a unary operation $(A,a)^* = (a^{-1}A,1)$, and if $M$ is left cancellative then $\mathcal{S}(M)$ is right $\mathcal{E}$-Ehresmann with respect to this operation. However, $\mathcal{S}(M)$ need not be right $\mathcal{E}$-restriction. Indeed $(A,1)(B,b)=(A\cup B,b)$ but 
\[\begin{array}{rcl}
(B,b)((A,1)(B,b))^* &=&  (B,b)(A \cup B,b)^*\\
&=&  (B,b)(b^{-1}A\cup b^{-1}B,1) \\
&=&(B \cup b(b^{-1}A) \cup b(b^{-1}B),b) \\
&=& (B \cup b(b^{-1}A),b),
\end{array}\]
where the last step follows from the fact that for all $y\in M$ and $X\subseteq M$ we have that $y(y^{-1}X)\subseteq X$. 
From Remark~\ref{rem:inclusions}, 
 $M$ is a group if and only if this inclusion may always be replaced  by $y(y^{-1}X)=X$. Thus if $M$ is a group then   $b(b^{-1}A)=A$ and $(A,1)(B,b)=(B,b)((A,1)(B,b))^*$. Moreover, we have already seen (in Lemma \ref{lem:Sreg}) that if $M$ is a group then $\mathcal{S}(M)$ is a proper inverse monoid. Conversely, if $M$ is not a group and we choose $y\in M$ and $X\subseteq M$ with  $y(y^{-1}X) \subsetneq X$, then taking $A=X$, $B=\emptyset$ and $b=y$ in the above we see that the right ample identity fails. 
\end{proof}

\begin{remark}
We note that Lemma \ref{lem:Sreg} together with Proposition \ref{prop:Rrel} yields that the expansion $\mathcal{S}$ maps groups to proper inverse monoids, unipotent monoids to proper left restriction monoids, and right cancellative monoids to left ample monoids. Similarly, by Proposition \ref{prop:Lrel} we have that $\mathcal{S}$ maps right abundant monoids to right abundant monoids and left cancellative monoids to right adequate monoids. 
\end{remark}

\section{Finitary conditions}
In this section we ask: which monoids $M$ have the property that $\mathcal{S}(M)$ satisfies one of our suite of finitary conditions? In all cases, a necessary (but not sufficient) condition is that $M$ itself satisfies the same finitary property. 

\subsection{Conditions (R) and (L) }\label{sub:RL}
For condition (R) and condition (L), the characterisation turns out to be very simple:
\begin{proposition}\label{prop:finite}
Let $M$ be a monoid. The following are equivalent:
\begin{enumerate}
    \item $\mathcal{S}(M)$ satisfies condition (R);
    \item $\mathcal{S}(M)$ satisfies condition (L);
    \item $M$ is finite.
\end{enumerate} 
\end{proposition}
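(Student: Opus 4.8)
The plan is to dispatch the two easy implications $(3)\Rightarrow(1)$ and $(3)\Rightarrow(2)$ at once, and then to prove $(1)\Rightarrow(3)$ and $(2)\Rightarrow(3)$ by exhibiting, for infinite $M$, a single pair witnessing the failure of the relevant finite generation. If $M$ is finite then $\mathcal{S}(M)=\mathcal{P}(M)\rtimes M$ is finite, and in a finite monoid $N$ every subact of $N\times N$ is finite, hence finitely generated; so both conditions (R) and (L) hold. The content is therefore the converse directions, and for each it suffices to produce one pair $a,b$ for which $\mathbf{R}(a,b)$ (respectively its left dual) fails to be finitely generated as a subact whenever $M$ is infinite.

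For $(1)\Rightarrow(3)$ I would take $a=b=(M,1)$ and study the right annihilator $\mathbf{r}\big((M,1)\big)=\mathbf{R}\big((M,1),(M,1)\big)$. Since $(M,1)(U,u)=(M,u)$, a short computation gives $\mathbf{r}((M,1))=\{\,((U,u),(V,u)) : U,V\subseteq M,\ u\in M\,\}$, so the two set-coordinates are entirely unconstrained and only the $M$-coordinates are tied. I then consider the family $q_m=((\{m\},1),(\emptyset,1))$ for $m\in M$. The key step is that $q_m$ is indecomposable in this subact: if $q_m=p\cdot(W,w)$ with $p=((U',u'),(V',u'))\in\mathbf{r}((M,1))$, then matching the second coordinate $(V'\cup u'W,\,u'w)=(\emptyset,1)$ forces $V'=\emptyset$ and $W=\emptyset$, whereupon the first coordinate $(U'\cup u'W,\,u'w)=(U',u'w)=(\{m\},1)$ forces $U'=\{m\}$. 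Thus any generating set must contain, for each $m$, a generator whose first set-coordinate is exactly $\{m\}$; these are pairwise distinct, so infinitely many generators are needed when $M$ is infinite, and condition (R) fails.

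The dual implication $(2)\Rightarrow(3)$ runs along the same lines with the left annihilator $\{(p,q):p(M,1)=q(M,1)\}$ of $(M,1)$, and here the main obstacle appears: left multiplication acts by $(T,t)\cdot((U',u'),(V',v'))=((T\cup tU',tu'),(T\cup tV',tv'))$, so the surviving set-coordinate $U'$ is \emph{scaled} by $t$ rather than left untouched, and $tu'=1$ need not force $t=1$ when $M$ has nontrivial right-invertible elements. I expect to resolve this through the membership condition: $p=((U',u'),(V',v'))$ lies in the left annihilator exactly when $u'=v'$ and $U'\cup u'M=V'\cup u'M$. Reaching $q_m=((\{m\},1),(\emptyset,1))$ again forces $T=\emptyset$, $V'=\emptyset$ and $tu'=1$, so the membership relation collapses to $U'\subseteq u'M$; writing a fixed $c_0\in U'$ as $c_0=u'e_0$ then gives $m=tc_0=tu'e_0=e_0$, a value determined by the generator alone. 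Hence each generator yields at most one $q_m$, and infinitely many generators are again required for infinite $M$, so condition (L) fails.

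The one conceptual point worth flagging throughout is that conditions (R) and (L) demand finite generation as a \emph{subact} of $\mathcal{S}(M)\times\mathcal{S}(M)$, which is strictly stronger than finite generation of the corresponding annihilator as a \emph{congruence}; this is precisely why these conditions can fail for infinite $M$ even though (as established later) $\mathcal{S}(M)$ may still be finitely equated and weakly coherent. The choice $a=b=(M,1)$ is made exactly so that the annihilator decouples the two set-coordinates, making the singleton family $q_m$ both available and indecomposable.
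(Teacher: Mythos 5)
Your proof is correct. The finite case and your argument for condition (R) are essentially the paper's: it works with the same subact $\mathbf{R}((M,1),(M,1))=\{((D,d),(E,d)):D,E\subseteq M\}$, the only cosmetic difference being that it takes a single witness $((\{b\},1),(\{c\},1))$ with $b\neq c$ chosen so that $\{b\},\{c\}$ avoid the finitely many set-components of a putative generating set, rather than your family $q_m=((\{m\},1),(\emptyset,1))$; both arguments reduce to the observation that a singleton first component forces the generator's set component to be that singleton. The genuine divergence is in the (L) direction. The paper's witness is $((B,1),(M\setminus B,1))$ for a subset $B$ lying outside $Y=\{tP: t\in M,\ P \text{ a set-component of a generator}\}$; such a $B$ exists by the cardinality bound $|Y|\leq\kappa<2^{\kappa}=|\mathcal{P}(M)|$, and disjointness of $B$ and $M\setminus B$ immediately forces $T=\emptyset$ and the contradiction $B=tP\in Y$. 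You instead reuse the singleton family $q_m$ and show each generator reaches at most one $q_m$: the key point, which you identify correctly, is that $T=\emptyset$, $V'=\emptyset$, $tu'=1$ and the membership condition $U'\subseteq u'M$ give $m=tc_0=(tu')e_0=e_0$ for any fixed $c_0=u'e_0\in U'$, so $m$ is determined by the generator independently of the choice of $t$ (any two left inverses $t,t'$ of $u'$ send $c_0$ to the same $e_0$ -- it is worth making this one-line check explicit, since $t$ itself is not determined). Your route is more uniform, using the same witnesses for both conditions, and it avoids the set-theoretic counting on $\mathcal{P}(M)$; the paper's version buys a shorter (L) argument because the disjointness of $B$ and its complement does all the work in one step.
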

\begin{proof}
If $M$ is finite then $\mathcal{S}(M)$ is also finite and hence (R) and (L) hold.

To show that (1) implies (3), we prove the contrapositive. Suppose $M$ is infinite and consider the subact $R:=\mathbf{R}((M,1),(M,1))$. By definition 
\begin{eqnarray*}
R &=& \{ ((D,d), (E,e)): (M,1)(D,d)=(M,1)(E,e)\} = \{ ((D,d), (E,e)): M \cup D = M \cup E, d=e\} \\&=& \{ ((D,d), (E,d)): D, E \subseteq M\}.    
\end{eqnarray*}
We show that $R$ is not finitely generated as a subact of $\mathcal{S}(M)\times \mathcal{S}(M)$. Suppose for contradiction that $X \subseteq R$ is a finite generating set.  Let $b,c \in M$ be distinct elements such that $\{b\} \neq P$ and $\{c\} \neq Q$ for all $((P,p),(Q,p)) \in X$. Since $((\{b\},1), (\{c\},1)) \in R$, there must exist $((P,p),(Q,q)) \in X$ and $(T,t) \in \mathcal{S}(M)$ such that
\[\begin{array}{rcccl}(\{b\},1) &=&(P,p)(T,t) &=&(P \cup pT, pt)\\(\{c\},1) &=& (Q,p)(T,t) &=& (Q \cup pT, pt).
\end{array}\]
Since by assumption $\{b\} \neq P$ and $\{c\} \neq Q$, it follows that $\{b\} = pT = \{c\}$, giving the desired contradiction.

Similarly, to show that (2) implies (3), we prove the contrapositive. Suppose $M$ is infinite with cardinality $\kappa$ and consider the subact $L:=\mathbf{L}((M,1),(M,1))$.  By definition 
\begin{eqnarray*}
L &=& \{ ((D,d), (E,e)): (D,d)(M,1)=(E,e)(M,1)\} = \{ ((D,d), (E,e)): D \cup dM = E \cup eM, d=e\} \\&=& \{ ((D,d), (E,d)): D \cup dM = E \cup dM\}.    
\end{eqnarray*}
We show that $L$ is not finitely generated as a subact of $\mathcal{S}(M)\times \mathcal{S}(M)$. Suppose for contradiction that $X \subseteq L$ is a finite generating set. Let $$Y  =\{tP: t \in M, \mbox{ and there exists } ((P,p), (Q,p)) \in X \mbox{ or } ((Q,p), (P,p)) \in X\}.$$
Since $X$ is finite,  $|Y| \leq  \kappa < 2^\kappa = |\mathcal{P}(M)|$. So there exists $B \in \mathcal{P}(M)$ such that $B \not\in Y$. Since $((
B,1), (M \setminus B,1)) \in L$, there must exist $((P,p),(Q,q)) \in X$ and $(T,t) \in \mathcal{S}(M)$ such that

\[\begin{array}{rcccl}
(B,1) &=& (T,t)(P,p) &=& (T \cup tP, tp)\\(M \setminus B,1) &=& (T,t)(Q,p) &=& (T \cup tQ, tp).
\end{array}\]
 Since $B \cap (M \setminus B)=\emptyset$, it follows that $T= \emptyset$ and hence $B = tP \in Y$, giving the desired contradiction. 
\end{proof}

\begin{remark}\label{rem:Rr?} Note that in Proposition~\ref{prop:finite}, to ensure that $M$ is finite, it is enough to insist that either $\mathbf{R}((M,1),(M,1))$ or $\mathbf{L}((M,1),(M,1))$ is finitely generated as a right (respectively, left) act. This certainly implies that 
$\mathbf{r}((M,1))$ (respectively $\mathbf{l}((M,1))$) is finitely generated as a right (respectively, left) congruence. We will return to this point at the end of the paper.
    \end{remark}

\subsection{The ideal Howson properties}\label{sub:howson}

Since $M$ is a retract of $\mathcal{S}(M)$, it follows immediately from \cite[Corollary 3.5]{DGM24} and its left-right dual that if $\mathcal{S}(M)$ is right (respectively, left) ideal Howson, then $M$ is right (respectively, left) ideal Howson. On the other hand, if $M$ is left or right ideal Howson it need not be the case that $\mathcal{S}(M)$ is left or right ideal Howson, as we shall demonstrate below.  We give a stronger property which will turn out to exactly characterise the monoids $M$ for which $\mathcal{S}(M)$ is right ideal Howson.
\begin{definition}\label{def:srih}(Strongly right ideal Howson)
A  monoid $M$ is {\em strongly right ideal Howson} if for all $a,b \in M$ with $aM \cap bM \neq \emptyset$ there exists $n \geq 1$ and $u_1, \ldots, u_n \in aM \cap bM$ such that 
\begin{enumerate} [label=\textnormal{(\arabic*)}]
\item $aM \cap bM = \bigcup_{1 \leq i \leq n} u_iM$;
\item for each $i=1, \ldots, n$, the set $L_i = (aM \cap bM) \setminus u_iM$ is finite.
\end{enumerate}
If $M$ is strongly right ideal Howson and $aM \cap bM \neq \emptyset$ we say that elements $u_1, \ldots, u_n$ satisfying conditions (1) and (2) above are a {\em strong finite generating set} for $aM \cap bM$.  
\end{definition}

\begin{remark}(Principally right ideal Howson)
Notice that if $M$ is strongly right ideal Howson and moreover for all $a,b \in M$ with $aM\cap bM\neq \emptyset$ we have a strong finite generating set of cardinality $n=1$, then the first condition simply says that any non-empty intersection of principal ideals is again principal, whilst the second condition is vacuous (the difference is empty). In this case we shall say that $M$ is {\em principally right ideal Howson}.
\end{remark}

From the definitions it is clear that principally right ideal Howson implies strongly right ideal Howson which in turn implies right ideal Howson. The converse implications do not hold: 
\begin{example}\label{ex:notstrongright}The construction in \cite[Theorem 3.10]{CG21} may be invoked to give a finite band $B_2^1$ that is not principally right ideal Howson; since it is finite, it is certainly right ideal Howson. In a similar vein, the semigroup $S=S_{2,0}$ defined in Section 5.1 of \cite{CG21} with presentation
\[S=\langle a,b,w_1,w_2, v_1,v_2\mid aw_1=bv_1, aw_2=bv_2\rangle\]
is right ideal Howson and by \cite[Lemma 2.2]{CG21} so too is the monoid $M=S^1$; we claim $M$ is not strongly right ideal Howson. To this end, notice that 
\[[a]M\cap [b]M=[aw_1]M \cup [aw_2]M\]
and clearly 
\[\{ [aw_1a^n]:n\in \mathbb{N}\}\]
is an infinite subset of 
\[([a]M\cap [b]M)\setminus [aw_2]M.\]
\end{example}

\begin{theorem}\label{thm:srih}
The expansion $\mathcal{S}(M)$ of a monoid $M$ is right ideal Howson if and only if $M$ is strongly right ideal Howson.   Moreover,  $\mathcal{S}(M)$ is principally right ideal Howson if and only if $M$ is principally right ideal Howson.
\end{theorem}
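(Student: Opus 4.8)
The plan is to prove both directions of the main equivalence by computing $(A,a)\mathcal{S}(M)\cap(B,b)\mathcal{S}(M)$ explicitly and relating its generation to the structure of $aM\cap bM$ in $M$. First I would observe that the principal right ideal of $(A,a)$ is
\[
(A,a)\mathcal{S}(M)=\{(A\cup aZ,az):Z\subseteq M,\ z\in M\},
\]
so that an element $(C,c)$ lies in this ideal precisely when $A\subseteq C$, $c\in aM$, and $C\setminus A\subseteq aM$ (writing $c=az$ and taking $Z$ to contain a preimage of $c$ together with preimages of the points of $C\setminus A$). Carrying this out for both $(A,a)$ and $(B,b)$, I would show that $(A,a)\mathcal{S}(M)\cap(B,b)\mathcal{S}(M)$ is non-empty exactly when $A=B$ and $aM\cap bM\neq\emptyset$ (the second-coordinate condition forces $c\in aM\cap bM$, while the set-coordinate forces the two base sets to agree), and that in that case the intersection consists of all $(A\cup W,c)$ with $c\in aM\cap bM$ and $W\subseteq aM\cap bM$ finite-or-arbitrary subject to the containment constraints. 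This reduction is the conceptual heart of the argument, turning a question about ideals in $\mathcal{S}(M)$ into the data $(aM\cap bM,\ A)$ in $M$.

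Next I would establish the forward direction ($\mathcal{S}(M)$ right ideal Howson $\Rightarrow$ $M$ strongly right ideal Howson). Assuming a finite generating set $(C_1,c_1),\dots,(C_n,c_n)$ for the intersection ideal, I would read off from the second coordinates that $u_i:=c_i$ lie in $aM\cap bM$ and that their principal ideals cover $aM\cap bM$, giving condition (1) of Definition~\ref{def:srih}; and from the set coordinates, since every element $(A\cup\{x\},c)$ with $x\in aM\cap bM$ must factor through some generator $(C_i,c_i)$ via right multiplication by some $(T,t)$, I would extract that $C_i\setminus A\subseteq u_iM$ up to a finite correction, which forces $L_i=(aM\cap bM)\setminus u_iM$ to be finite (condition (2)). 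Conversely, given a strong finite generating set $u_1,\dots,u_n$ with finite sets $L_i$, I would write down an explicit finite generating set for the intersection in $\mathcal{S}(M)$: the elements $(A,u_i)$ together with the finitely many ``correction'' elements $(A\cup\{x\},x')$ needed to account for the points of $\bigcup_i L_i$, and then verify directly that every $(A\cup W,c)$ in the intersection is obtained from one of these by right multiplication. The principally right ideal Howson addendum would follow by specialising this analysis to $n=1$: a single generator survives iff a single $u$ with $aM\cap bM=uM$ exists, with the finiteness condition becoming vacuous.

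The hard part will be the bookkeeping in the set coordinate when verifying condition (2) and its converse. The subtlety is that multiplying a generator $(C_i,u_i)$ on the right by $(T,t)$ produces base set $C_i\cup u_iT$, so covering an arbitrary finite enlargement $A\cup W$ of $A$ requires simultaneously (a) hitting the target second coordinate $c$ as $u_it$, and (b) realising $W$ as a subset of $u_iT$ while not overshooting outside $A\cup W$. Keeping precise track of which points of $aM\cap bM$ can be absorbed ``for free'' into $u_iM$ versus which must be supplied by finitely many dedicated generators is where the two finiteness conditions of strong right ideal Howson-ness are exactly what is needed, and matching this up cleanly in both directions is the delicate step. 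I expect the left-coordinate analysis of Proposition~\ref{prop:Lrel} and Remark~\ref{rem:inclusions} (in particular the identities $a(a^{-1}X)=aM\cap X$ and the characterisation of when $a^{-1}(aX)=X$) to be the main technical tools for controlling these set-theoretic preimages.
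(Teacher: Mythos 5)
There is a genuine error at what you call the conceptual heart of the argument: your claim that $(A,a)\mathcal{S}(M)\cap(B,b)\mathcal{S}(M)$ is non-empty exactly when $A=B$ and $aM\cap bM\neq\emptyset$ is false. An element $(C,c)$ lies in $(A,a)\mathcal{S}(M)$ iff $A\subseteq C$, $C\setminus A\subseteq aM$ and $c\in aM$, so membership in the intersection forces only $B\setminus A\subseteq aM$, $A\setminus B\subseteq bM$ and $c\in aM\cap bM$ --- it does not force $A=B$. (For instance, with $a=b=1$ and $B=\emptyset$, the intersection is all of $(A,1)\mathcal{S}(M)$ for any $A$.) Consequently your converse direction, which analyses only intersections with equal base sets, omits exactly the cases that require real work: one must choose $P,Q$ with $B\setminus A=aP$ and $A\setminus B=bQ$, write $u_i=ap_i=bq_i$, and exhibit generators of the form $(A\cup B\cup C,u_i)$ with $C$ ranging over subsets of the finite sets $L_i$; this is where condition (2) of strong right ideal Howsonness is actually consumed. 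Your proposed generating set ``$(A,u_i)$ plus finitely many corrections'' does not account for the asymmetry between $A$ and $B$ at all.

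The forward direction is closer to being right, since one may specialise to $A=B=\emptyset$, and your extraction of condition (1) from the second coordinates is fine. But your route to condition (2) (``$C_i\setminus A\subseteq u_iM$ up to a finite correction'') is not yet an argument. The point you are missing is a normalisation of the generating set of $aM\cap bM$: one fixes $\mathcal{R}$-incomparable generators $u_1,\dots,u_n$ with some $L_i=(aM\cap bM)\setminus u_iM$ infinite, and then for each $w\in L_i$ shows that any expression $(\{w\},u_i)=(C,c)(D,d)$ with $(C,c)$ in a generating set of $(\emptyset,a)\mathcal{S}(M)\cap(\emptyset,b)\mathcal{S}(M)$ forces $c\in u_iM$ (by incomparability, since $u_i=cd$), hence $cD\subseteq u_iM$, hence $C=\{w\}$ because $w\notin u_iM$. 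That pins down infinitely many distinct mandatory generators. Without the incomparability step you cannot conclude that $cD$ avoids $w$, and the finiteness of $L_i$ does not follow. Both halves of your sketch therefore need substantive repair before they constitute a proof.
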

\begin{proof}
In what follows we shall suppress the dependence on $M$ and write simply $S:=\mathcal{S}(M)$. Suppose first that $M$ is strongly right ideal Howson. Let $(A,a), (B,b) \in S$ be such that $(A,a)S \cap (B,b)S \neq \emptyset$. Then there exists $(X,x), (Y,y) \in S$ with $(A,a)(X,x) = (B,b)(Y,y)$, that is, $A \cup aX = B \cup bY$ and $ax=by$. In particular, we have that $B \setminus A \subseteq aM$, $A \setminus B \subseteq bM$  and $aM \cap bM \neq \emptyset$. Let $P, Q \subseteq M$ be such that $B \setminus A =aP$ and $A \setminus B = bQ$. Our assumption is that $M$ is strongly right ideal Howson, so there exists a  strong finite generating set $u_1, \ldots, u_n$ for $aM \cap bM$.  Since each $u_i \in aM \cap bM$ we may write $u_i=ap_i=bq_i$ for some $p_i,q_i \in M$.  Let $L_i$ be defined as in Definition~\ref{def:srih}. We now claim that the finite set
\[K:=\{(A \cup B \cup C, u_i): C \subseteq L_i, 1 \leq i \leq n\}\] is a generating set for $I:=(A,a)S \cap (B,b)S$. 

First note that if $C \subseteq L_i$ for some $i$, then in particular $C \subseteq aM \cap bM$ and so we may write $C=aC_a=bC_b$ for some subsets $C_a, C_b$ of $M$, from which it is clear that
\begin{eqnarray*}(A,a)(P \cup C_a, p_i) &=& (A \cup aP \cup aC_a, ap_i) = (A \cup (B\setminus A) \cup C, u_i) = (A \cup B \cup C, u_i), \mbox{and}\\
(B,b)(Q \cup C_b, q_i) &=& (B \cup bQ \cup bC_b, bq_i) = (B \cup (A\setminus B) \cup C, u_i) = (A \cup B \cup C, u_i),
\end{eqnarray*}
demonstrating that $K\subseteq I$. 

Now suppose that $(D,d) \in I$. Then there exist $(U,u), (V,v) \in S$ such that $(D,d) = (A,a)(U,u)=(B,b)(V,v)$. That is, $D=A \cup aU = B \cup bV$ and $d=au=bv$. Since $d \in aM \cap bM$ we also have $d=u_im$ for some $i$ with $1 \leq i \leq n$ and some $m \in M$. Notice that if $c \in D \setminus (A \cup B)$ then $c \in aU \cap bV \subseteq aM \cap bM = \bigcup_{1 \leq j \leq n} u_jM$. Now let 
$C = \{c \in D \setminus (A \cup B): c \not\in u_iM\} \subseteq L_i$, and (noting that  $(D \setminus (A \cup B)) \setminus C \subseteq u_iM$) let $D'$ be such that $u_iD' = (D \setminus (A \cup B)) \setminus C$.  Then 
$$(D,d) = (A \cup B \cup C, u_i)(D',m),$$
Thus $(D,d)$ is in the right ideal generated by $K$ and it follows that $I=KS$ is finitely generated.  

In the case where $M$ is principally right ideal Howson, it follows immediately from the description of the generating set $K$ given above that a non-empty intersection $(A,a)S \cap (B,b)S$ is equal to the principal ideal generated by $(A\cup B, u)$ where $aM \cap bM = uM$, and so $S$ is principally right ideal Howson.

Conversely, suppose that $S$ is right ideal Howson. As commented, \cite[Corollary 3.5]{DGM24} gives that a retract of a right ideal Howson monoid is right ideal Howson; the proof  of this result also shows that a retract of a principally right ideal Howson monoid is principally right ideal Howson. Thus  $M$ is right ideal Howson and principally right ideal Howson if $S$ is. Suppose  for contradiction that $M$ is not strongly right ideal Howson. Then there exist $a, b \in M$ such that $aM \cap bM \neq \emptyset$ is finitely generated, but has no strong finite generating set. Fix a generating set $u_1, \ldots, u_n$ for $aM \cap bM$ of $\mathcal{R}$-incomparable elements. For our given generating set, let us also assume that $L_i = (aM \cap bM) \setminus u_iM$ is infinite (as must be the case for some $i$, since no finite generating set is strong). Let us write $u_j=ap_j=bq_j$ for each $j$. For each $w \in L_i$ we may also write $w = u_jv$ for some $j \neq i$ and $v \in M$, from which it is clear that
$$(\{w\}, u_i) = (\emptyset, a)( \{p_jv\},p_i) = (\emptyset, b)( \{q_jv\},q_i) \in (\emptyset,a)S \cap (\emptyset, b)S.$$
Now let $N$ be a generating set for $J:=(\emptyset,a)S \cap (\emptyset, b)S$. With any $w$ as above,  $(\{w\}, u_i) \in J$, so there exists $(C,c) \in N$ and $(D,d) \in S$ such that $$(\{w\}, u_i) = (C,c)(D,d) = (C \cup cD, cd).$$
Since $(C,c) \in J$ we have in particular that $c \in aM \cap bM = \bigcup_{1 \leq j \leq n} u_jM$. Moreover, since $u_i=cd$ and the generators are $\mathcal{R}$-incomparable we must have $c \in u_iM$, and hence $cD \subseteq u_iM$.  But now, $w \not\in u_iM$ and $\{w\}= C \cup cD$ forces $C=\{w\}$.  Thus for every $w \in L_i$, any generating set of $J$ must contain an element of the form $(\{w\}, c)$ where $c \in aM \cap bM$, and since $L_i$ is infinite, this demonstrates that each generating set of $J$ is infinite, giving the desired contradiction. 
\end{proof}

Having characterised the monoids $M$  for which $\mathcal{S}(M)$ is right ideal Howson, we give below a characterisation of the monoids $M$ for which $\mathcal{S}(M)$ is left ideal Howson.  This is not quite as neat as the characterisation of right ideal Howson; one requires a condition concerning the left action of $M$ on arbitrary subsets of $M$.

\begin{definition}
(Left co-ordinated)  We say that $a, b \in M$ has {\em finite left co-ordinate system $C$ with respect to subsets $A, B \subseteq M$} if $C$ is a finite subset of the left subact $$\mathbf{L}(a,b) = \{(p,q): pa = qb\}$$ of $M \times M$ with the property that for all $x,y \in M$ with $xa=yb$ there exists $t \in M$ and $(p,q) \in C$ such that $xa=tpa=tqb=yb$ and $tpA \cup tqB \subseteq xA \cup yB$.
We say that $M$ is (n-){\em left co-ordinated} if  for all elements $a,b$ and all subsets $A, B \subseteq M$ there is a finite left co-ordinate system $C$ (with $|C|\leq n)$ for $a,b$ with respect the sets $A,B$.
\end{definition}

\begin{remark}
\label{rem:leftcoord}

\begin{enumerate}

\item If condition (L) holds, then we also  have $M$ is left co-ordinated: taking $C$ to be a finite generating set for the $M$-act $\mathbf{L}(a,b)$, if $xa=yb$, then $(x,y) = t\cdot (p,q)$ for some $(p,q) \in C$ and $t \in M$ hence giving $xa=tpa=tqb=yb$ and $tpA \cup tqB = xA \cup yB$ for all sets $A, B \subseteq M$.
\item If $M$ is left co-ordinated, then it is in particular left ideal Howson: if $C$ is a finite left co-ordinate system for the pair $a,b$ with respect to (any choice of) sets $A$, $B$ then for any $f=xa=yb \in Ma \cap Mb$ we have $f=tpa=tqb$ for some $t \in M$ and $(p,q) \in C$, thus $D=\{pa: (p,q) \in C\}$ is a finite generating set for $Ma \cap Mb$. 
\item It follows from (1) that the property of being left co-ordinated is a finitary condition, since this is true of condition (L).
\item Note that $\mathbf{L}(a,a)$ is a left congruence, namely the  left annihilator congruence of $a$, and being finitely generated as a left $M$-act certainly implies being finitely generated as a left congruence. Thus if condition (L) holds (i.e. each left $M$-act $\mathbf{L}(a,b)$ is finitely generated) we have in particular that $M$ is finitely left-equated (i.e. each left annihilator congruence $\mathbf{l}(a)$ is finitely generated).
\item From (1) and (2) we have: 
$$M \mbox{  satisfies condition (L)} \Rightarrow  M \mbox{ is left co-ordinated} \Rightarrow M \mbox{ is left ideal Howson}.$$
\end{enumerate}
\end{remark}
For right cancellative monoids these three properties coincide. \begin{proposition}
\label{rem:leftcoordcancel}
Let $M$ be a monoid with the property that for all $a,b, f \in M$ the pair of equations $xa=f$, $yb=f$ has at most one solution $(x,y) \in M \times M$. Then: $M$ is left ideal Howson if and only if $M$ is left co-ordinated if and only if  $M$ satisfies (L).
\end{proposition}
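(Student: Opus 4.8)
The plan is to lean on the chain of implications already recorded in Remark~\ref{rem:leftcoord}(5), namely that condition (L) implies left co-ordinated implies left ideal Howson. These hold for every monoid, so the only implication that needs a new argument is the reverse one at the weak end: under the stated hypothesis, left ideal Howson should imply condition (L). Everything will follow once I show that the hypothesis forces $\mathbf{L}(a,b)$ to be isomorphic, as a left $M$-act, to the principal-ideal intersection $Ma \cap Mb$.

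First I would set up the comparison map. For fixed $a,b \in M$ consider $\theta : \mathbf{L}(a,b) \to Ma \cap Mb$ given by $\theta(p,q) = pa$ (which equals $qb$ since $(p,q) \in \mathbf{L}(a,b)$). This is a morphism of left $M$-acts, where $Ma \cap Mb$ carries the left-multiplication action: indeed $\theta(t\cdot(p,q)) = \theta(tp,tq) = (tp)a = t(pa) = t\,\theta(p,q)$. It is surjective by the very definition of $Ma \cap Mb$. The load-bearing observation is that the hypothesis is exactly injectivity of $\theta$: if $\theta(p,q) = \theta(p',q') = f$ then $(p,q)$ and $(p',q')$ both solve the pair of equations $xa = f$, $yb = f$, and uniqueness forces $(p,q) = (p',q')$. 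Hence $\theta$ is an isomorphism of left $M$-acts (and in the degenerate case $Ma \cap Mb = \emptyset$ both acts are empty, so the claim is trivial there).

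Next I would transfer finite generation across $\theta$. Since $M$ is a monoid, the cyclic subact generated by $f \in M$ is $Mf$, which coincides with the principal left ideal generated by $f$; consequently a left ideal is finitely generated as a left ideal precisely when it is finitely generated as a left $M$-act. As an isomorphism of $M$-acts carries generating sets to generating sets, $\mathbf{L}(a,b)$ is finitely generated as a left act if and only if $Ma \cap Mb$ is finitely generated as a left ideal. Assuming $M$ is left ideal Howson, the latter holds for every $a,b \in M$, whence each $\mathbf{L}(a,b)$ is finitely generated as a left $M$-act, which is exactly condition (L); combined with Remark~\ref{rem:leftcoord}(5) this closes the equivalence.

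I expect the only real content to be the equivalence ``uniqueness hypothesis $\Leftrightarrow$ injectivity of $\theta$''; the remaining steps are routine bookkeeping, namely checking that $\theta$ respects the $M$-action and that finite generation of a left ideal matches finite generation of its cyclic-act decomposition. (It is worth noting in passing that taking $a=b$ in the hypothesis already forces right cancellativity, so this class of monoids is genuinely restrictive, but right cancellativity is not needed directly—injectivity of $\theta$ is the precise property in play.)
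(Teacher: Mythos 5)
Your argument is correct and in substance identical to the paper's: both reduce, via Remark~\ref{rem:leftcoord}(5), to showing that under the uniqueness hypothesis left ideal Howson implies (L), and both do so by lifting a finite generating set of $Ma\cap Mb$ through the correspondence $(p,q)\mapsto pa=qb$, whose injectivity is exactly the stated hypothesis. Packaging this correspondence as an isomorphism of left $M$-acts is merely a cleaner phrasing of the element-by-element transfer of generators that the paper carries out explicitly.
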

\begin{proof}
From the fifth point of Remark \ref{rem:leftcoord} it suffices to show that if $M$ is left ideal Howson then $M$ satisfies (L).  Indeed, if $X$ is a finite generating set for $Ma \cap Mb$, first note that for each $g \in X$ there exists a unique pair $(p_g, q
_g) \in M \times M$ such that $g = p_ga=q_gb$, and so taking $C = \{(p_g,q_g): g\in X\}$ we see that $C\subseteq \mathbf{L}(a,b)$. If  $f=xa=yb$ holds, then $f=tg$ for some $t \in M$ and $g\in X$, and so $f=tp_ga=tq_gb$. Then by uniqueness of solution we have $x=tp_g$ and $y=tq_g$, demonstrating that $C$ is a finite generating set for $\mathbf{L}(a,b)$.  
\end{proof}
However, in general, this is not the case, as the next two examples demonstrate.

\begin{example}\label{ex:notstrongleft} 
We make use of \cite[Example 3.8]{GHS} to demonstrate that a left ideal Howson monoid need not be left co-ordinated. Consider  the four element diamond lattice $\mathbb{D}=\{E, F, D, G\}$ where $E \wedge F = G$ and $E \vee F = D$, and let $S$ be the lattice obtained as the direct product
$$S=\mathbb{N} \times \mathbb{D} = \{ E_i, F_i, D_i, G_i: i \geq 0\},$$
regarded as a semilattice with respect to $\wedge$ and then let $M= S^1$ be the monoid obtained by adjoining an identity to $S$. Since $M$ is a semilattice, it is in particular left ideal Howson. We show that the pair $a = E_0$, $b=F_0$ does not have a finite left co-ordinate system with respect to the sets $A= B = \{1\}$. First note that 
$$\mathbf{L}(a,b) = \{(F_i, E_j), (F_i, G_j), (G_i, E_j), (G_i, G_j): i , j \geq 0\}.$$
Let $C$ be a finite subset of $\mathbf{L}(a,b)$ and let $m, n \in \mathbb{N}$ be maximal such that $(F_m, Y) \in C$ for some $Y\in M$ and $(X, E_n) \in C$ for some $X \in M$. If $C$ were a left co-ordinate system for $a, b$ with respect to $A$ and $B$, then, in particular, for each $i>m$ and $j>y$ there must exist $(p,q) \in C$ and $t \in M$ such that $F_iE_0=tpE_0 = tqF_0 = E_jF_0$ and $\{tp, tq\} \subseteq \{F_i, E_j\}$. For $tp=F_i$, we require  $p \geq F_i>F_m$, and  contradicting the maximality of $m$. For $tp = E_j$ we require $p \geq E_j$, contradicting $(p,q) \in \mathbf{L}(a,b)$.
\end{example}

\begin{example}Let $M= \mathbb{Z} \cup \{\bot, \top\}$ ordered by $\bot < x < \top$ for all $x \in \mathbb{Z}$. Clearly $M$ is a monoid semilattice with operation $ab = {\rm min}(a,b)$, and identity element $\top$. We show that $M$ is left co-ordinated but does not satisfy $(L)$. Let $a, b\in M$, and $A, B \subseteq M$ and set 
$$C = \begin{cases}
\{(\top, \top)\}  & \mbox{ if } a=b\\
\{(\top, a), (a,a)\}  & \mbox{ if } a<b\\
\{(b, \top), (b,b)\}  & \mbox{ if } b<a.
\end{cases}$$
We claim that $C$ is a finite left co-ordinate system for $a, b$ with respect to $A, B$. To see this, we consider each of the three cases above ($a=b$, $a<b$ and $b<a$) in turn.

(1) If $a=b$ we have
$$C = \{(\top, \top)\} \subseteq \mathbf{L}(a,a) = \{(x,y): xa=ya\} = \{(x,y): x,y \geq a\} \cup \{(x,x): x<a\}.$$
We aim to show that for all $(x,y) \in \mathbf{L}(a,a)$ there exists $t \in M$ such that $xa = ta = ya$ and $tA \cup tB \subseteq xA \cup yB$. If $A = \emptyset$ we can take $t=y$; if $B = \emptyset$ we can take $t=x$; and if $x=y$ we can take $t = x$.  Thus we may assume without loss of generality that $A$ and $B$ are non-empty and $a \leq x <y$ (a symmetric argument will hold for $y<x$). We subdivide into further cases. First, if all $\alpha \in A$ satisfy $\alpha <a$, then taking $t=y>x \geq a$ yields $tA \cup tB = yA \cup yB = A \cup yB = xA \cup yB$, since both $x$ and $y$ are strictly greater than all elements of $A$. 
Second, if all $\alpha \in A$ satisfy $\alpha <x$ and there exists $\alpha' \in A$ with $a \leq \alpha' < x$, then taking $t=\alpha'>x \geq a$ yields 
\begin{eqnarray*}
tA \cup tB &=& \alpha' A \cup \alpha' B = \{\alpha' \alpha: \alpha \in A\} \cup \{\alpha' \beta: \beta \in B\}\\  &=& \{\alpha'\} \cup \{\gamma \in A: \gamma <\alpha'\}\cup \{\delta \in B: \delta <\alpha'\}\\
&\subseteq & \{\alpha'\} \cup \{\gamma \in A: \gamma <x\}\cup \{\delta \in B: \delta <y\} = xA \cup yB
\end{eqnarray*}
where the final line follows from the fact that $x$ and $y$ are strictly greater than  $\alpha' \in A$. Finally, if there exists $\alpha'' \in A$ with $\alpha'' \geq x$, then taking $t=x$ gives  
\begin{eqnarray*}
tA \cup tB &=& x A \cup x B = \{x \alpha: \alpha \in A\} \cup \{x \beta: \beta \in B\}\\  &=& \{x\} \cup \{\gamma \in A: \gamma <x\}\cup \{\delta \in B: \delta <x\}\\
&\subseteq & \{x\} \cup \{\gamma \in A: \gamma <x\}\cup \{\delta \in B: \delta <y\} = xA \cup yB
\end{eqnarray*}
where the final line follows from the fact that $x<y$.

(2) If $a<b$ we have
\begin{eqnarray*}
C = \{(\top, a), (a,a)\} \subseteq \mathbf{L}(a,b) &=& \{(x,y): xa=yb\}\\ &=& \{(x,a): x\geq a\} \cup \{(x,x): x<a\}\\
&=& \{x(\top,a): x\geq a\} \cup \{x(a,a): x<a\},
\end{eqnarray*}
showing that $\mathbf{L}(a,b)$ is finitely generated by $C$, and hence (by Remark \ref{rem:leftcoord}) $C$ is a finite left co-ordinate set for $a, b$ with respect to $A, B$.

(3) Dually, if $b<a$ we have
\begin{eqnarray*}
C = \{(b, \top), (b,b)\} \subseteq \mathbf{L}(a,b) &=& \{(x,y): xa=yb\}\\ &=& \{(b,y): y\geq b\} \cup \{(y,y): y<b\}\\
&=& \{y(b,\top): y\geq b\} \cup \{y(b,b): y<b\},
\end{eqnarray*}
showing that $\mathbf{L}(a,b)$ is finitely generated by $C$, and hence (by Remark \ref{rem:leftcoord}) $C$ is a finite left co-ordinate set for $a, b$ with respect to $A, B$.

Thus $M$ is finitely left co-ordinated. Finally, to see that $M$ does not satisfy (L), suppose for contradiction that $D$ is a finite generating set for the left $M$-act 
$$\mathbf{L}(\bot, \bot) = \{(x,y): x \bot = y \bot\} = M \times M.$$
Since, in particular $(\top, m) \in \mathbf{L}(\bot, \bot)$ for all $m \in M$, taking $m$ so that it does not occur as a second component of any pair in $D$ gives a contradiction, since we require that $(\top, m) = t(p,q)$ for some $t \in M$ and $(p,q) \in D$, but then $\top = tp$ implies $t=p=\top$, and then $q=m$.
\end{example}

\begin{thm}
\label{prop:leftcoord} The expansion $\mathcal{S}(M)$ of a monoid $M$  is left ideal Howson if and only if $M$ is left co-ordinated. Further, $\mathcal{S}(M)$ is principally left ideal Howson if and only if $M$ is left $1$-co-ordinated.
\end{thm}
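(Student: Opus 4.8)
The plan is to translate the left ideal Howson property of $S := \mathcal{S}(M)$ directly into the left co-ordinate condition by computing the principal left ideals of $S$ explicitly. Since $(T,t)(A,a) = (T \cup tA, ta)$, an element $(D,d)$ lies in $S(A,a)$ precisely when there is some $t \in M$ with $ta = d$ and $tA \subseteq D$. Intersecting the two descriptions, one finds that $(D,d) \in S(A,a) \cap S(B,b)$ if and only if there is a pair $(x,y) \in \mathbf{L}(a,b)$ with $xa = d$ and $xA \cup yB \subseteq D$. This reformulation is the key bridge: the pairs $(x,y) \in \mathbf{L}(a,b)$ that arise are exactly the objects controlled by a left co-ordinate system, and the containment $xA \cup yB \subseteq D$ is what encodes the dependence on the sets $A, B$.

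For the forward direction, suppose $M$ is left co-ordinated and fix $(A,a), (B,b) \in S$ with $I := S(A,a) \cap S(B,b) \neq \emptyset$ (the empty case being trivial). Choose a finite left co-ordinate system $C$ for $a, b$ with respect to $A, B$ and set
\[ K := \{ (pA \cup qB,\, pa) : (p,q) \in C \}. \]
Each member of $K$ lies in $I$ (witnessed by the pair $(p,q)$ itself), so $K \subseteq I$. Given an arbitrary $(D,d) \in I$ with witnessing pair $(x,y)$, the defining property of $C$ supplies $t \in M$ and $(p,q) \in C$ with $xa = tpa = tqb = yb$ and $tpA \cup tqB \subseteq xA \cup yB \subseteq D$; a direct computation then gives $(D, t)(pA \cup qB, pa) = (D,d)$, so $(D,d) \in SK$. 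Hence $I = SK$ is finitely generated and $S$ is left ideal Howson.

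For the converse, suppose $S$ is left ideal Howson and fix $a, b \in M$ and $A, B \subseteq M$. By the reformulation, $(xA \cup yB, xa) \in I$ for every $(x,y) \in \mathbf{L}(a,b)$. Take a finite generating set $\{(C_1,c_1), \ldots, (C_k,c_k)\}$ for $I$; each $(C_j, c_j) \in I$ comes with a witness $(x_j, y_j) \in \mathbf{L}(a,b)$ satisfying $x_j a = c_j$ and $x_j A \cup y_j B \subseteq C_j$. I claim $C := \{(x_j, y_j) : 1 \leq j \leq k\}$ is a left co-ordinate system for $a,b$ with respect to $A,B$. Indeed, given $(x,y)$ with $xa = yb$, writing $(xA \cup yB, xa) = (T,t)(C_j, c_j)$ forces $t c_j = xa$ and $tC_j \subseteq xA \cup yB$; setting $(p,q) = (x_j, y_j)$ then yields $tpa = tq b = xa = yb$ and $tpA \cup tqB = t(x_j A \cup y_j B) \subseteq tC_j \subseteq xA \cup yB$, exactly as required.

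Finally, the principal refinement follows by tracking cardinalities through both constructions: in the forward direction $|K| \leq |C|$, so a co-ordinate system of size at most one yields an intersection generated by at most one element (and genuinely principal when non-empty); in the converse, a single generator $(C_1, c_1)$ produces a co-ordinate system $\{(x_1, y_1)\}$ of size at most one. The only genuinely delicate point throughout is the bookkeeping of the set-containment condition $tpA \cup tqB \subseteq xA \cup yB$: one must verify both that the proposed generators of $I$ actually lie in $I$ and that the witnesses recovered from a generating set of $I$ respect the subsets $A$ and $B$. By contrast, the algebraic equalities relating $a$, $b$ and the second coordinates are routine.
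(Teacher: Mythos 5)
Your proof is correct and follows essentially the same route as the paper's: both directions hinge on the observation that $(D,d)\in S(A,a)\cap S(B,b)$ exactly when some $(x,y)\in\mathbf{L}(a,b)$ satisfies $xa=d$ and $xA\cup yB\subseteq D$, from which a left co-ordinate system $C$ yields the generating set $\{(pA\cup qB,pa):(p,q)\in C\}$ and, conversely, witnesses for a finite generating set of the intersection yield a co-ordinate system. The cardinality bookkeeping for the principal case also matches the paper's argument.
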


\begin{proof}
We again suppress the dependence on $M$ and write simply $S:=\mathcal{S}(M)$.
Suppose first $M$ is left co-ordinated. Let $J:=S(A,a) \cap S(B,b) \neq \emptyset$, where  $(A,a), (B,b) \in S$. We shall show that $J$ is finitely generated. 
For each $(F, f) \in J$ there exist $(X,x), (Y,y) \in S$ such that
\[\begin{array}{rcccl}
(F,f) &=& (X,x)(A,a) &=& (X \cup xA,xa)\\
&=& (Y,y)(B,b) &=& (Y \cup yB,yb).
\end{array}\]
Thus $f=xa=yb$ and $xA \cup yB \subseteq F$ holds. Since $M$ is left co-ordinated, $a,b$ has a finite left co-ordinate system $C$ with respect to the sets $A,B$, and so by assumption we have that there exists $t \in M$ and $(p,q) \in C$ such that $f=tpa=tqb$ and $tpA \cup tqB \subseteq xA \cup yB$. Since $(p,q) \in C \subseteq \mathbf{L}(a,b)$, we have $pa=qb$. Setting $u=pa=qb$ we see that$$(F,f) = (F, t)(pA \cup qB, u).$$
Noting that
$$(pA \cup qB, u) = (qB,p)(A,a) = (pA,q)(B,b) \in J,$$
it then follows that $D:=\{(pA \cup qB, u): (p,q) \in C, u=pa=qb\}$ is a finite generating set for $J$ with $1 \leq |D| \leq |C|$. Thus if $M$ is left ($1$-)co-ordinated, then $\mathcal{S}(M)$ is (principally) left ideal Howson.

Conversely, suppose that $S$ is left ideal Howson. Let $a,b\in M$ with $Ma\cap Mb\neq\emptyset$ and let $A,B\subseteq M$.  We must find a left co-ordinate system of $Ma\cap Mb$ with respect to the sets $A,B$. Let $c,d\in M$ with $ca=db$ and  consider $(A,a), (B, b) \in S$. Notice that 
\[(dB,c)(A,a)=(dB\cup cA,ca)=(cA\cup dB,db)=(cA,d)(B,b),\]
so $J:=S(A,a) \cap S(B,b) \neq \emptyset$. Let  $\{(U_i, u_i): 1 \leq i \leq n\}$ be  a finite generating set for $J$. Then for each $i$ we must have $u_i=p_ia=q_ib$ for some $p_i, q_i \in M$, and $p_iA \cup q_i B \subseteq U_i$. Now suppose $f=xa=yb$ for some $x,y \in M$. Then
$$(xA \cup y B, f)  = (xA \cup yB,x)(A,a)=(xA \cup y B,y)(B,b)\in J,$$
and so by definition there exists $1 \leq i \leq n$ and $(T,t) \in S$ such that 
$$(xA \cup yB, f) = (T,t)(U_i, u_i) = (T \cup tU_i, tu_i).$$
Thus, $f=tu_i=tp_ia=tq_ib$ and $tp_iA \cup tq_iB \subseteq tU_i \subseteq xA \cup yB$
so that $C=\{(p_i, q_i): 1\leq i\leq n\}$
is a left co-ordinate system for $a,b$ with respect to the set $A,B$. If $\mathcal{S}(M)$ is (principally) left ideal Howson we see that $M$ is left (1-)co-ordinated.
\end{proof}

For right cancellative monoids, we obtain the following simplified characterisation. \begin{corollary}
Let $M$ be a right cancellative monoid. The expansion $\mathcal{S}(M)$ is left ideal Howson if and only if $M$ is left ideal Howson. 
\end{corollary}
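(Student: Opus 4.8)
The plan is to deduce this corollary from Theorem~\ref{prop:leftcoord} by showing that, under the right cancellative hypothesis, being left co-ordinated collapses to the ostensibly weaker property of being left ideal Howson. Since Theorem~\ref{prop:leftcoord} already establishes that $\mathcal{S}(M)$ is left ideal Howson if and only if $M$ is left co-ordinated, and since being left co-ordinated always implies left ideal Howson (Remark~\ref{rem:leftcoord}(2)), the only thing left to verify is the reverse implication in the presence of right cancellativity: that a right cancellative left ideal Howson monoid is automatically left co-ordinated.

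The key observation is that right cancellativity forces a uniqueness-of-solution property of precisely the kind assumed in Proposition~\ref{rem:leftcoordcancel}. Concretely, I would first note that if $M$ is right cancellative and $f = xa = ya'$ with $a = a'$, then $xa = ya$ yields $x = y$ by cancelling $a$ on the right; more to the point, for fixed $a, b, f$, if $xa = f = yb$ and $x'a = f = y'b$ then $xa = x'a$ forces $x = x'$ and similarly $y = y'$. Thus the pair of equations $xa = f$, $yb = f$ has at most one solution $(x,y) \in M \times M$. This is exactly the hypothesis of Proposition~\ref{rem:leftcoordcancel}, so I would invoke that proposition directly to conclude that, for right cancellative $M$, being left ideal Howson is equivalent to being left co-ordinated (indeed equivalent to satisfying condition (L)).

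With this in hand the corollary follows in one line: by Proposition~\ref{rem:leftcoordcancel}, right cancellative $M$ is left co-ordinated if and only if it is left ideal Howson; combining this equivalence with Theorem~\ref{prop:leftcoord} gives that $\mathcal{S}(M)$ is left ideal Howson if and only if $M$ is left co-ordinated if and only if $M$ is left ideal Howson, as required.

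I do not anticipate a genuine obstacle here, since the substantive work has been done in Theorem~\ref{prop:leftcoord} and Proposition~\ref{rem:leftcoordcancel}; the corollary is essentially a packaging of those two results. The only point requiring minor care is confirming that right cancellativity (rather than left cancellativity) is the correct hypothesis to guarantee uniqueness of the \emph{pair} $(x,y)$ solving $xa = f = yb$. This is immediate, because the two equations involve right multiplication by $a$ and by $b$ respectively, so it is cancellation on the right that decouples and pins down each coordinate; I would make this explicit to avoid any left-right confusion, but beyond that the argument is routine.
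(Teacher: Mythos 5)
Your proposal is correct and follows exactly the paper's own route: the paper proves the corollary by citing Proposition~\ref{rem:leftcoordcancel} together with Theorem~\ref{prop:leftcoord}, and your verification that right cancellativity yields the at-most-one-solution hypothesis (cancelling $a$ and $b$ on the right in $xa=f=yb$) is the small implicit step the paper leaves to the reader. Nothing further is needed.
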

\begin{proof}
This follows immediately from Proposition \ref{rem:leftcoordcancel} and \ref{prop:leftcoord}.
\end{proof}

To add to the list of properties in Theorem~\ref{thm:retract} we now have:
\begin{proposition}
If $M$ is a left (1-)co-ordinated monoid, and $T$ is a retract of $M$, then $T$ is also left (1-)co-ordinated.
\end{proposition}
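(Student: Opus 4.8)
The plan is to push a co-ordinate system forward from $M$ to $T$ along the retraction. Write the retract data as a homomorphism $\iota\colon T\to M$ together with a surjective homomorphism $\pi\colon M\to T$ satisfying $\pi\iota=\mathrm{id}_T$; identifying $T$ with its image under $\iota$, we may regard $T$ as a submonoid of $M$ and $\pi$ as a homomorphism fixing every element of $T$. Fix $a,b\in T$ and subsets $A,B\subseteq T$; the task is to produce a finite left co-ordinate system for $a,b$ with respect to $A,B$ that lives in $T\times T$ (and has at most one element in the $1$-co-ordinated case). Viewing $a,b$ and $A,B$ inside $M$ and using that $M$ is left co-ordinated, I would choose a finite left co-ordinate system $C'=\{(p_i,q_i):1\le i\le n\}\subseteq\mathbf{L}(a,b)$ for $a,b$ with respect to $A,B$ in $M$ (with $|C'|\le 1$ in the $1$-co-ordinated case), and then take $C:=\{(\pi(p_i),\pi(q_i)):1\le i\le n\}\subseteq T\times T$ as the candidate system for $T$.

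Next I would check the two requirements. That $C\subseteq\{(p,q)\in T\times T: pa=qb\}$ follows by applying the homomorphism $\pi$ to each equation $p_ia=q_ib$ and using $\pi(a)=a$, $\pi(b)=b$, which gives $\pi(p_i)a=\pi(q_i)b$. For the co-ordinating condition, take any $x,y\in T$ with $xa=yb$. This equation holds in $M$ as well, so the property of $C'$ in $M$ supplies $t'\in M$ and some $(p_i,q_i)\in C'$ with $xa=t'p_ia=t'q_ib=yb$ and $t'p_iA\cup t'q_iB\subseteq xA\cup yB$. Setting $t:=\pi(t')\in T$ and applying $\pi$ should then transport this data into $T$.

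The only real content is seeing that $\pi$ carries these data across correctly, which rests on two facts: $\pi$ fixes $T$ pointwise, and $\pi$ is compatible with the set-wise left action in the sense that $\pi(mS)=\pi(m)S$ whenever $S\subseteq T$ (since $\pi(ms)=\pi(m)\pi(s)=\pi(m)s$ for $s\in S$). Applying $\pi$ to the chain $xa=t'p_ia=t'q_ib=yb$ and using that $\pi$ fixes $x,y,a,b$ yields $xa=t\pi(p_i)a=t\pi(q_i)b=yb$; applying $\pi$ to the inclusion $t'p_iA\cup t'q_iB\subseteq xA\cup yB$ and using the compatibility above (valid because $A,B\subseteq T$) yields $t\pi(p_i)A\cup t\pi(q_i)B\subseteq xA\cup yB$. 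Thus $C$ is a finite left co-ordinate system for $a,b$ with respect to $A,B$ in $T$, and $|C|\le|C'|\le n$, so the $1$-co-ordinated case is inherited as well. I do not anticipate a genuine obstacle: this is the standard retract-inheritance pattern, and the one point needing care is precisely the verification that $\pi$ sends the set-wise product $mS$ to $\pi(m)S$ on subsets of $T$.
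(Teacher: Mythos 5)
Your proposal is correct and follows essentially the same route as the paper: apply the retraction $\pi$ to a finite left co-ordinate system for $a,b$ with respect to $A,B$ obtained in $M$, using that $\pi$ fixes $a,b,x,y$ and the subsets $A,B$ of $T$ pointwise to transport both the equational chain and the inclusion $t'p_iA\cup t'q_iB\subseteq xA\cup yB$ into $T$. The cardinality bound $|C|\le|C'|$ handles the $1$-co-ordinated case exactly as in the paper.
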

\begin{proof}
Let $\varphi: M \rightarrow T$ be a retraction, and let $a,b \in T \subseteq M$, and $A, B \subseteq T \subseteq M$. Since $M$ is left co-ordinated, there exists a finite left co-ordinate system $C \subseteq M \times M$ for $a,b$ with respect to $A, B$ considered within $M$. In particular, for each  $(x,y) \in T \times T$ with $xa=yb$, there exists $(p,q) \in C$ and $t \in M$ such that $xa=tpa=tqb=yb$ and $tpA \cup tqB \subseteq xA \cup yB$. Applying the morphism $\varphi$ (recalling that this fixes all elements of $T$, and that $a,b,x,y, A, B$ all lie in $T$) yields that $xa=(t\varphi)( p\varphi  )a=(t\varphi)( q\varphi) b=yb$ and $(t\varphi)( p \varphi) A \cup (t\varphi)( q\varphi) B \subseteq xA \cup yB$. Further, $pa=qb$ gives that $(p\varphi)a=(q\varphi)b$ so that  $C' = \{(p\varphi, q\varphi): (p,q) \in C\}$ is a (finite) left co-ordinate system with respect to  $A,B$ considered within $T$. Clearly, if $|C|=1$, then $|C'|=1$ also.
\end{proof}

\subsection{The finitely equated conditions}\label{sub:fre}
Since $M$ is a retract of $\mathcal{S}(M)$ it follows from \cite[Corollary 4.12]{DGM24} that a necessary condition for $\mathcal{S}(M)$ to be finitely right (respectively, left) equated is that $M$ is finitely right (respectively, left) equated, however, as we shall see below, the converse does not hold (in either case). We look first at the case of finitely left equated monoids, providing an exact characterisation of the monoids $M$ for which $\mathcal{S}(M)$ is finitely left equated.

Recalling the left-right dual of Definitions~\ref{defn:rac} and \ref{defn:R}, we note
that for a monoid $M$ and $a\in M$ we have  $\mathbf{l}(a)=\{ (u,v)\in M\times M:ua=va\}=\mathbf{L}(a,a)$. It therefore makes sense to consider whether   $\mathbf{l}(a)$ is finitely generated as a subact of the left diagonal act of $M$, and this is easily seen to imply that $\mathbf{l}(a)$ is finitely generated as a left congruence. (The converse is not true, as we demonstrate in Example~\ref{ex:labund}.)

\begin{theorem}
\label{thm:FLE}
The  expansion $\mathcal{S}(M)$ of a monoid $M$  is finitely left equated if and only if for each $a \in M$ we have that $\mathbf{l}(a)$ is finitely generated as a left $M$-act.
\end{theorem}

\begin{proof}
Suppose first that for all $a \in M$ we have that $\mathbf{l}(a)$ is finitely generated as a left $M$-act. For an arbitrary $A \subseteq M$ and $a \in M$, we aim to show that $\mathbf{l}((A,a))$ is a finitely generated left congruence. By assumption, $\mathbf{l}(a)$ is finitely generated as an $M$-act; let $X \subseteq \mathbf{l}(a)$ be a finite symmetric generating set, so that $\mathbf{l}(a) = MX$. We claim that $\mathbf{l}((A,a))$ is the left congruence $\lambda_Y$ generated by the finite set $Y = \{((\emptyset, 1), (A,1))\} \cup \{((pA, q), (qA,p)): (p,q) \in X\}$. To see this, first note that $Y \subseteq \mathbf{l}((A,a))$ since:
\begin{eqnarray*}
(\emptyset, 1)(A,a) &=& (A,a) =  (A, 1)(A,a)\\
(pA, q)(A,a) &=& (pA \cup qA,qa) = (qA \cup pA,pa) =(qA,p)(A,a),
\end{eqnarray*}
where the second equality holds for all $(p,q) \in X \subseteq \mathbf{l}(a)$. Now for $((U,u), (V,v)) \in \mathbf{l}((A,a))$ we have $U \cup uA = V\cup vA $ and $ua=va$. Since $(u,v) \in \mathbf{l}(a)$, there exist $(p,q) \in X$ and $t \in M$ such that $u=tp$ and $v=tq$. Setting $W=U \cup uA = V\cup vA$ we have:
\begin{eqnarray*}
(U,u) = (U,u)(\emptyset, 1)\, \lambda_Y  \,(U,u)(A, 1) = (W,u) = (W,1)(vA,u) = (W,1)(tqA, tp) = (W,t)(qA, p)\\
(V,v) = (V,v)(\emptyset, 1)\, \lambda_Y  \,(V,v)(A, 1) = (W,v) = (W,1)(uA,v) = (W,1)(tpA, tq) = (W,t)(pA, tq),
\end{eqnarray*}
and since $(qA, p)\, \lambda_Y\,  (pA, q)$ this demonstrates that $(U,u)\, \lambda_Y \, (V,v)$.

Conversely, suppose that $\mathcal{S}(M)$ is finitely left equated. In particular for all $a \in M$ we have that $\mathbf{l}((\{1\}, a)$ is a finitely generated left congruence; let $X$ be a finite symmetric generating set. Now, if $((P,p), (Q,q)) \in X$ we have 
$$(P,p)(\{1\}, a) = (P \cup \{p\}, pa) = (Q \cup \{q\}, qa) = (Q,q)(\{1\}, a),$$
that is, $P \cup \{p\} = Q \cup \{q\}$  and $(p,q) \in \mathbf{l}(a)$. We claim that $\mathbf{l}(a) = MZ$ where 
$$Z = \{(1,1)\} \cup \{ (p,q): \exists ((P,p), (Q,q)) \in X\} \subseteq \mathbf{l}(a).$$ Since $X$ is finite, $Z$ is necessarily finite. Let $(u,v) \in \mathbf{l}(a).$ If $u=v$ then clearly $(u,v) = u(1,1)$. Suppose then that $u \neq v$. Then
$$(\{u\},v)(\{1\},a) = (\{u,v\}, va) = (\{u,v\}, ua) = (\{v\},u)(\{1\},a),$$
demonstrating that $((\{u\},v), (\{v\},u)) \in \mathbf{l}(\{1\}, a)$. Thus there is an $X$-sequence
$$(\{u\},v) = (T_1, t_1)(P_1, p_1), (T_1, t_1)(Q_1, q_1) = (T_2, t_2)(P_2, p_2), \ldots, (T_n, t_n)(Q_n, q_n) = (\{v\}, u).$$
For $1 \leq i \leq n$, set $Z_i = T_i \cup t_iP_i$, $z_i = t_ip_i$ and also set $Z_{n+1} = T_n \cup t_nQ_n$ and $z_{n+1} = t_nq_n=u$.   We prove (by induction) that for each $i$ we have $Z_i \subseteq \{u,v\}$ and $z_i \in \{u,v\}$. In view of our $X$-sequence, this is immediate for $i=1$. Now suppose that $1 \leq i \leq n$ is such that $Z_i \subseteq \{u,v\}$ and $z_i \in \{u,v\}$. Since $((P_i,p_i), (Q_i, q_i)) \in X$ we have $P_i\cup \{p_i\} = Q_i \cup \{q_i\}$ and so
$$\{u,v\} \supseteq Z_i \cup \{z_i\} = T_i \cup t_iP_i \cup \{t_ip_i\} = T_i \cup t_i(P_i \cup \{p_i\}) = T_i \cup t_i(Q_i \cup \{q_i\}) \supseteq T_i \cup t_iQ_i.$$
Thus
$Z_{i+1} = T_{i+1} \cup t_{i+1}P_{i+1} = T_i \cup t_iQ_i \subseteq \{u,v\}$
and $z_{i+1} = t_{i+1}p_{i+1} = t_iq_i \in \{u,v\}$. Now, returning to our $X$-sequence, we have (from the second co-ordinate)
$$v=t_1p_1, t_1q_1=t_2p_2, \ldots, t_nq_n=u$$
where (by the argument above) each term in this sequence, $z_i$, is either equal to one of $u$ or $v$. Since $z_1=v \neq u = z_{n+1}$, there must exist $i$ such that $z_i=t_ip_i = v$ and $z_{i+1}=t_{i+1}p_{i+1} = t_iq_i = u$, giving $(u,v) = t_i(q_i,p_i) \in MZ$ as required. 
\end{proof}
We state the following immediately consequence of Theorem \ref{prop:leftcoord} and Theorem \ref{thm:FLE}.
\begin{corollary}
The  expansion $\mathcal{S}(M)$ of a monoid $M$  is weakly left coherent if and only if $M$ is left co-ordinated and, for each $a \in M$, $\mathbf{l}(a)$ is finitely generated as a left $M$ act.  
\end{corollary}

Right cancellative monoids provide a particular instance of Theorem \ref{thm:FLE}, giving:

\begin{corollary}\label{cor:rcanc} Let $M$ be a right cancellative monoid. Then $\mathcal{S}(M)$ is weakly left coherent if and only if $M$ is left ideal Howson.
\end{corollary}

\begin{proof}
Suppose $M$ is right cancellative. Then for $a \in M$ we have 
$$\mathbf{l}(a) = \{(u,v): ua=va\} = \{(u,u): u \in M\} = M(1,1),$$
and so by Theorem \ref{thm:FLE}, $\mathcal{S}(M)$ is finitely left equated, and so $\mathcal{S}(M)$  is weakly left coherent if and only if $\mathcal{S}(M)$ is left ideal Howson. By Proposition~\ref{prop:leftcoord} this is equivalent to $M$ being left co-ordinated, and since $M$ is right cancellative Proposition \ref{rem:leftcoordcancel} gives that this is equivalent to the condition that $M$ is left ideal Howson.
\end{proof}

As we have alluded to earlier, there exist left abundant monoids $M$ such that $\mathcal{S}(M)$ is not left abundant; making use of Theorem \ref{thm:FLE} we exhibit an example of a semilattice $M$ (which is certainly left abundant) such that  $\mathcal{S}(M)$ is not finitely left equated and so certainly not left abundant.

\begin{example}\label{ex:labund} Let $M$ be the monoid with presentation {\em as a monoid semilattice} given by
$$M:= \langle a,b_i,c_i, i\in\mathbb{N}\mid b_ia=c_i a \rangle.$$
Since $M$ is a semilattice, it is certainly abundant and so finitely left equated. By construction, we have infinitely many distinct pairs $(b_i,c_i) \in \mathbf{l}(a)$, and it is easily seen that $\mathbf{l}(a)$ is not finitely generated as a left $M$-act. By Theorem \ref{thm:FLE}, we conclude that $\mathcal{S}(M)$ is not finitely left equated (and in particular not left abundant).  
\end{example}

In contrast with the previous example, and turning now to the case of finitely right equated monoids, it can be deduced from Proposition \ref{prop:rabundweak} and Proposition \ref{prop:Lrel} that if $M$ is right abundant, then $\mathcal{S}(M)$ \emph{is} finitely right equated. We generalise this to a statement about \emph{skeletons} below. 
\begin{definition}
For a right congruence $\rho_W$ on a monoid $M$ generated by a finite set $W$, and for $a,b \in \rho_W$, we may call the sequence $c_1,d_1,\cdots,c_n,d_n$ appearing in the $W$-sequence in Proposition~\ref{prop:gen} a {\em skeleton} from $a$ to $b$ over $W$. We say that $\rho_W$ {\em admits a fixed skeleton over $W$} if  there is a fixed sequence $c_1,d_1,\cdots,c_n,d_n$ that is a skeleton from $a$ to $b$ over $W$  for all $(a,b) \in \rho_W$. Further, we say that $M$ is \emph{strongly finitely right equated} if for all $x \in M$, the right annihilator congruence $\mathbf{r}(x)$ admits a fixed skeleton over some finite generating set.  
\end{definition}

Notice that several natural classes of monoids are strongly finitely right equated.  In particular, every right abundant monoid $M$ is strongly finitely right equated, since for any $x \in M$ and any $a,b \in \mathbf{r}(x)$, taking $e$ to be an idempotent in the $\mathcal{L}^*$-class of $x$ we have that $xe=x$ so that $Y:=\{(1,e), (e,1)\} \subseteq \mathbf{r}(a)$ and then for any $(u,v) \in \mathbf{r}(x)$ we see that
$$u=1u,\, eu = ev,\, 1v=v$$
is a $Y$-sequence, demonstrating that $\mathbf{r}(x)$ has fixed skeleton $1, e, e, 1$.
Our next aim is to show that $\mathcal{S}(M)$ is strongly finitely right equated if and only if $M$ is strongly finitely right equated, hence giving a generalisation of the result for right abundance. First notice that the property of being strongly finitely right equated is preserved under taking retracts.

\begin{lemma}
\label{lem:retract}
Let $M$ be a monoid and let $S$ be a retract of $M$. If $M$ is strongly finitely right equated, then so is $S$.
\end{lemma}

\begin{proof}
Let  $\theta: M \rightarrow S$ be a retraction where we consider $S$ as a submonoid of $M$. Fix $a \in S $ and let $\mathbf{r}_M(a)$ and $\mathbf{r}_S(a)$ denote the right annihilator congruence of $a$ on $M$ and $S$, respectively. By assumption, $\mathbf{r}_M(a)$ is strongly finitely right equated and so there is a finite generating set $X$ such that $\mathbf{r}_M(a)$ admits the finite skeleton $c_1, d_1, \ldots, c_n, d_n$ over $X$. Then, for $(u,v) \in \mathbf{r}_S(a) \subseteq \mathbf{r}_M(a)$ we have that there exist $t_1, \ldots, t_n\in M$ such that 
$u = c_1t_1, \, d_1t_1=c_2d_2,\,  \ldots, \,d_nt_n = v$. Applying the morphism $\theta$ (which fixes each element of $S$) then gives $u = c_1\theta t_1\theta, \,\,d_1\theta t_1\theta =c_2d_2\theta,\,  \ldots, \,d_n\theta t_n\theta = v$. Thus, noting that if $ap=aq$ for $p,q \in M$ we also have $a(p\theta)=a(q\theta)$ where $p,q \in S$, we may take $Y = \{(p\theta, q\theta): (p,q) \in X\}$ and sequence  $c_1\theta, d_1\theta, \ldots, c_n\theta, d_n\theta$ to see that $S$ is strongly finitely right equated.
\end{proof}

\begin{theorem}
\label{thm:sfre} The expansion $\mathcal{S}(M)$ of a monoid $M$ is strongly finitely right equated if and only if $M$ is strongly finitely right equated.
\end{theorem}

\begin{proof}
One direction follows immediately from Lemma \ref{lem:retract} since $M$ is a retract of $\mathcal{S}(M)$.

Conversely, suppose that $M$ is strongly finitely right equated. Fix $a \in M$ and suppose that $X$ is a finite generating set for $\mathbf{r}(a)$ with fixed skeleton $p_1, q_1, \ldots, p_n, q_n$. For an arbitrary $A \subseteq M$ we aim to show that $\mathbf{r}((A,a))$ has the following fixed skeleton: $$(\emptyset, 1), (a^{-1}A, 1), (a^{-1}A, 1), (\emptyset, 1), (\emptyset, p_1), (\emptyset, q_1), \ldots, (\emptyset, p_n), (\emptyset, q_n), (\emptyset, 1), (a^{-1}A, 1), (a^{-1}A, 1), (\emptyset, 1).$$
As a first step, observe that 
\begin{eqnarray*}
(A, a)(\emptyset, 1) = (A, a) = (A, a)(a^{-1}A, 1)\\
(A, a)(\emptyset, p_i) = (A, ap_i) = (A, aq_i)  = (A,a)(\emptyset, q_i),
\end{eqnarray*}
so that the proposed sequence indeed splits into pairs of consecutive elements belonging to $\mathbf{r}((A,a))$.

Let $((U,u), (V,v)) \in \mathbf{r}((A,a))$. Then $A \cup aU = A \cup aV$ and $au=av$. Write $U = U_1 \cup U_2$ and $V = V_1 \cup V_2$ where $U_1 = U \setminus a^{-1}A$, $U_2 = U \cap a^{-1}A$, $V_1 = V \setminus a^{-1}A$ and $V_2 = V \cap a^{-1}A$. Then the condition $A \cup aU = A \cup aV$ becomes $A \cup aU_1 = A \cup aV_1$, and since the union is disjoint we deduce that $a U_1 = aV_1$, and so in particular $U_1 = \emptyset$ if and only if $V_1 = \emptyset$. Also note that using the first two pairs in our proposed skeleton we obtain:
$$(U,u) = (\emptyset, 1)(U,u), \qquad  (a^{-1}A, 1)(U,u) = (a^{-1}A, 1)(U_1,u),  \qquad (\emptyset, 1)(U_1,u) = (U_1,u)$$
whilst using the last two pairs of the proposed skeleton we obtain:
$$(V_1, v) = (\emptyset, 1)(V_1,v), \qquad (a^{-1}A, 1)(V_1,v) =(a^{-1}A, 1)(V,v), \qquad  (\emptyset, 1)(V,v) = (V,v).$$
Thus it suffices to show that there exist elements $(T_i, t_i) \in \mathcal{S}(M)$ such that
$$(U_1, u) = (\emptyset, p_1)(T_1, t_1), (\emptyset, q_1)(T_1, t_1) = (\emptyset, p_2)(T_2,t_2), \ldots,  (\emptyset, q_n)(T_n, t_n) = (V_1,v).$$
Equivalently, we need subsets $T_i \subseteq M$ with 
$$U_1 = p_1T_1, q_1T_1 = p_2T_2, \ldots, q_nT_n = V_1$$
and elements $t_i \in M$ such that $u=p_1t_1, q_1t_1 = p_2t_2, \ldots, q_nt_n$. The latter is secured since $M$ is strongly finitely right equated with the given skeleton. We focus then on constructing the subsets $T_i$.

Consider 
$$B: = \{(h,k): h \in U_1, k \in V_1, ah=ak\} \subseteq \mathbf{r}(a).$$
For all $(h,k) \in B$ there exist $t_1(h,k), \ldots, t_n(h,k) \in M$ such that
$$h = p_1t_1(h,k), \, q_1t_1(h,k) = p_2t_2(h,k), \,\ldots, \,q_nt_n(h,k) = k.$$
Setting $T_i  =\{t_i(h,k): (h,k) \in B\}$ for $i=1, \ldots, n$ we claim that $U_1 = p_1T_1, q_1T_1 = p_2T_2, \ldots, q_nT_n=V_1$, as required to complete the proof. Since $a U_1 = aV_1$, for each $h \in U_1$ there exists at least one $k \in V_1$ with $ah=ak$, and so each element $h \in U_1$ appears as a possible first co-ordinate of $B$, giving $p_1T_1 = U_1$. Similarly, each element $k \in V_1$ appears as a possible second co-ordinate of $B$, giving $q_nT_n = V_1$. For $1 \leq i \leq n-1$ we also have $y \in q_iT_i$ if and only if there exists $(h,k) \in B$ such that $y=q_it_i(h,k)$ if and only if there exists $(h,k) \in B$ such that $y=p_{i+1}t_{i+1}(h,k)$ if and only if $y \in p_{i+1}T_{i+1}$.
\end{proof}

Putting Theorem \ref{thm:srih}  together with Theorem \ref{thm:sfre} yields the following:
\begin{corollary}\label{cor:rabund} 
Let $M$ be a strongly finitely right equated monoid. Then $\mathcal{S}(M)$ is weakly right coherent if and only if $M$ is strongly right ideal Howson.
\end{corollary}

Recalling from \cite{FK} that a monoid is left (right) LCM if it is right (left) cancellative and principally left (right) ideal Howson, we have: 
\begin{corollary}
\label{cor:LCM}
If the monoid $M$ is left (right) LCM, then $\mathcal{S}(M)$ is weakly left (right) coherent.
\end{corollary}

\begin{proof} The result  follows directly from Corollary \ref{cor:rcanc} and Corollary \ref{cor:rabund}.
\end{proof}

The class of left LCM monoids includes: groups, free monoids, the multiplicative
monoid of non-zero elements in any LCM domain, and Artin monoids (see \cite{FK} for details).

\medskip
Our next lemma will be used show that there exist finitely right equated monoids $M$ such that $\mathcal{S}(M)$ is not finitely right equated. 
\begin{lemma}
\label{lem:FREobstruct}
Let $M$ be an infinite monoid and suppose that there exists $x \in M$ with the following properties:
\begin{itemize}
\item if $p$ is right invertible in $M$ and $xp=xq$ for some $q \in M$, then $p=q$;
\item for any finite subset $A$ of $M$, there exists $u,v \in M \setminus A$ such that $xu=xv$, but $u \neq v$.
\end{itemize}
Then $\mathcal{S}(M)$ is not finitely right equated.
\end{lemma}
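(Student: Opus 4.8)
The plan is to produce a single element of $\mathcal{S}(M)$ whose right annihilator congruence fails to be finitely generated; the natural candidate is $(\emptyset, x)$. Since $(\emptyset,x)(U,p) = (xU, xp)$, a pair $\bigl((U,p),(V,q)\bigr)$ lies in $\mathbf{r}((\emptyset,x))$ exactly when $xU = xV$ and $xp = xq$. In particular, for any $u,v \in M$ with $xu = xv$ the pair $\bigl((\{u\},1),(\{v\},1)\bigr)$ belongs to $\mathbf{r}((\emptyset,x))$, and the second hypothesis guarantees a supply of such \emph{collapsing pairs} $u \neq v$ avoiding any prescribed finite set. I would assume for contradiction that $\mathbf{r}((\emptyset,x)) = \rho_W$ for some finite $W$ (enlarging $W$ so that $W = W^{-1}$, and noting $W \subseteq \mathbf{r}((\emptyset,x))$ so that $\rho_W \subseteq \mathbf{r}((\emptyset,x))$), and show that $W$ cannot connect a suitably chosen collapsing pair.

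The first key step is to pin down the intermediate terms of any $W$-sequence. For a collapsing pair $u \neq v$, Proposition~\ref{prop:gen} gives a sequence of values $(\{u\},1) = (E_0,e_0), \ldots, (E_n,e_n) = (\{v\},1)$ in which $(E_{i-1},e_{i-1}) = (C_i,c_i)(T_i,t_i)$ and $(E_i,e_i) = (D_i,d_i)(T_i,t_i)$ for some $\bigl((C_i,c_i),(D_i,d_i)\bigr) \in W$. Because $\rho_W \subseteq \mathbf{r}((\emptyset,x))$, every value is $\mathbf{r}((\emptyset,x))$-related to $(\{u\},1)$, so $(xE_i, xe_i) = (\emptyset,x)(E_i,e_i) = (\{xu\}, x)$ for all $i$. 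In particular $x e_i = x = x\cdot 1$, and as $1$ is right invertible the \emph{first} hypothesis forces $e_i = 1$ throughout. Hence $c_i t_i = e_{i-1} = 1$ and $d_i t_i = e_i = 1$, so every $c_i$ and $d_i$ occurring is right invertible. Applying the first hypothesis a second time, now to the generator relation $x c_i = x d_i$ (valid since each generator lies in $\mathbf{r}((\emptyset,x))$, which also gives $xC_i = xD_i$), right invertibility of $c_i$ yields $c_i = d_i$. Thus each step preserves the ``multiplier part'': with $K_i := c_iT_i$ we have $E_{i-1} = C_i \cup K_i$ and $E_i = D_i \cup K_i$ where $xC_i = xD_i$.

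It then remains to track the set component. Let $\mathcal{C}$ be the (finite) collection of sets occurring as a first coordinate of a pair in $W$, and set $F = \{a \in M : \{a\} \in \mathcal{C}\}$, a finite subset of $M$. Using the second hypothesis with this $F$, I would choose a collapsing pair $u \neq v$ with $u \notin F$, so that $\{u\} \notin \mathcal{C}$. An induction shows $E_i = \{u\}$ for all $i$: if $E_{i-1} = \{u\}$ then $C_i \cup K_i = \{u\}$ forces $C_i \subseteq \{u\}$, and $C_i \neq \{u\}$ (since $\{u\} \notin \mathcal{C}$) gives $C_i = \emptyset$; then $xD_i = xC_i = \emptyset$ forces $D_i = \emptyset$, whence $E_i = K_i = \{u\}$. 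Consequently $\{v\} = E_n = \{u\}$, contradicting $u \neq v$. Therefore no finite $W$ generates $\mathbf{r}((\emptyset,x))$, so $\mathcal{S}(M)$ is not finitely right equated.

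The main obstacle I anticipate is that the first coordinates $C_i, D_i$ of the generators in $W$ are arbitrary subsets of $M$ and may well be \emph{infinite}, so the naive strategy of choosing $u,v$ outside the ``support'' of the generators is unavailable. The resolution is the two uses of the first hypothesis: collapsing all intermediate second coordinates to $1$ (hence all relevant $c_i$ to right invertible elements) and then collapsing each generator to $c_i = d_i$ with $xC_i = xD_i$. Only after this does the set component become rigid enough that the sole obstruction is the \emph{finite} list of singletons among the generator sets, namely $F$, which the second hypothesis lets me avoid. The analogous statement for finite left equatedness should follow by the dual argument, although the left action on $\mathcal{P}(M)$ is less symmetric, so I would expect the dual to require its own (non-symmetric) hypotheses rather than a verbatim transcription.
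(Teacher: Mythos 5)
Your proposal is correct and follows essentially the same route as the paper's proof: the same witness $(\emptyset,x)$, the same use of the first hypothesis to force all second coordinates in a $W$-sequence to be $1$ and each generator to satisfy $c_i=d_i$, and the same induction showing the set component stays $\{u\}$ because $u$ avoids the finite set of singletons occurring as first coordinates of generators. The only (cosmetic) difference is that you derive $e_i=1$ for every intermediate term directly from $\rho_W\subseteq\mathbf{r}((\emptyset,x))$, whereas the paper propagates right-invertibility step by step along the sequence.
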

\begin{proof}
We show that $\mathbf{r}((\emptyset, x))$ is not finitely generated. Suppose for contradiction that $Y$ is a finite symmetric generating set for $\mathbf{r}((\emptyset, x))$. Let $A$ be the union of all sets $P$ such that $|P| = 1$ and there exists $((P, p), (Q,q)) \in Y$ for some $p,q \in M$ and some $Q \subseteq M$. Since $Y$ is finite, so too is $A$. Let $u,v \in M\setminus A$ with $xu=xv$ and $u \neq v$ and consider the elements $(\{u\},1), (\{v\},1) \in \mathcal{S}(M)$. We have
$$(\emptyset, x)(\{u\},1) = (\{xu\},x) = (\{xv\},x) = (\emptyset, x)(\{u\},1).$$
Thus, by assumption, there must be a finite $Y$-sequence from $(\{u\},1)$ to $(\{v\},1)$. Suppose this sequence is:
$$(\{u\},1) = (P_1,p_1)(T_1, t_1), (Q_1, q_1)(T_1, t_1) = (P_2, p_2)(T_2,t_2), \ldots, (Q_n,q_n)(T_n,t_n) = (\{v\},1),$$
where for $i=1, \ldots, n$ we have $((P_i,p_i),(Q_i,q_i)) \in Y$ and $(T_i,t_i) \in \mathcal{S}(M)$.  For all $i$ we have
$$xP_i =xQ_i \mbox{ and } xp_i = xq_i.$$
Looking at the second component of the $Y$-sequence, we also have the sequence:
$$1 = p_1t_1, q_1t_1 = p_2t_2, \ldots, q_nt_n = 1.$$
Since $p_1$ is right invertible and $xp_1=xq_1$, we deduce that $q_1=p_1$ and $1=p_2t_2$. By induction we then find that for $1\leq i\leq n$ each $p_i$ is right invertible, $p_i=q_i$  and $1=p_it_i$. We now show by induction  that for $1\leq i\leq n$ we have $P_i = Q_i = \emptyset$ and $p_iT_i = \{u\}$. Looking at the first component of the first equation in our sequence, we have that $\{u\} = P_1 \cup p_1T_1$, which forces $|P_1| \leq 1$. Since $u \not\in A$ we deduce that $P_1 = \emptyset$ and $p_1T_1 = \{u\}$. Since $xP_i=xQ_i$, this also forces $Q_i=\emptyset$ and the base case holds. Suppose then that $P_i = Q_i = \emptyset$ and $p_iT_i = \{u\}$ for some $i<n$. Then the equation
$$(Q_i,q_i)(T_i,t_i) = (P_{i+1},p_{i+1})(T_{i+1},t_{i+1})$$
yields
$$P_{i+1} \cup p_{i+1}T_{i+1} = Q_{i} \cup q_{i}T_{i} = \emptyset \cup p_{i}T_{i} = \{u\}.$$
Thus $|P_{i+1}| \leq 1$ and since $u \not\in A,$ we find that $P_{i+1} = \emptyset$ (which in turn forces $Q_{i+1} = \emptyset$) and $p_{i+1}T_{i+1} = \{u\}$. But then we have the contradiction
$$\{v\} = Q_n \cup q_nT_n = \emptyset \cup p_nT_n = \{u\}.$$\end{proof}
\begin{example} We now  use of a  construction of John Fountain, (see \cite{G}), adapted to our needs, to demonstrate that there is a finitely right equated monoid $M$ satisfying the conditions of the previous lemma. Let $G$ be a finitely generated infinite group and let $x,x^2,x^3$ be distinct symbols not contained in $G$. Consider the set of formal symbols $M = G \cup xG \cup x^2G \cup \{x^3,x^4\}$,  with product determined as follows (where we write $x^0=1$):
$$ab = \begin{cases}
x^{i+j}gh & \mbox{ if } a=x^ig, b=x^jh, \mbox{ with } 0 \leq i, j \leq 2 \mbox{ and }i+j \leq 2\\
x^3 & \mbox{ if } a=x^ig, b=x^jh, \mbox{ with } 1 \leq i, j \leq 2 \mbox{ and }i+j =3\\
x^3 & \mbox{ if } a=x^0g \in G, b=x^3 \mbox{ or } a=x^3, b=x^0h \in G\\
x^4&\mbox{ otherwise. } 
\end{cases}$$
First, we show that $M$ satisfies the conditions of Lemma \ref{lem:FREobstruct} with respect to the element $x$. Indeed, if $p$ is right invertible, then $p \in G$ and $xp=xq$ implies that $xq \in xG$ and moreover $p=q$, giving that the first condition holds. For the second condition, since $G$ is infinite, for any finite subset $A$ there exist distinct elements $u',v' \in G$ such that $u=x^2u', v=x^2v'$ are distinct elements in $  x^2G \setminus A$ and $xu = x^3=xv$.

It remains to show that $M$ is finitely right equated or, in other words, to show that each of the right annihilators of its elements is finitely generated. We denote the identity relation in $M$ by $\iota_M$ and the universal relation in $G$ by $\omega_G$. The fact that $G$ is finitely generated (as a group) by a (finite) subset $H$, where we can assume $H=H^{-1}$, gives that 
$\omega_G$ is finitely generated as a right congruence by $H\times H$.  Fix $g \in G$. Then $\mathbf{r}(g) = \iota_M$  is clearly finitely generated by $\emptyset$. Next, \begin{eqnarray*}
\mathbf{r}(xg) &=& \{(b,c): b,c \in M, (xg) b = (xg) c\} \\&=& \iota_M \cup \{(x^2u, x^2v): u, v \in G\} \cup \{ (x^3, x^4), (x^4, x^3) \}.
\end{eqnarray*} Since for any $u,v\in G$ there is a finite $H\times H$-sequence in $G$ from $u$ to $v$, it follows that there is a finite $x^2H\times x^2H$-sequence from $x^2u$ to $x^2v$ in $M$. Thus $\mathbf{r}(xg)$ is finitely generated by
$\{ (x^2h,x^2k), (x^3,x^4): h, k \in H\}.$ 
Similarly, \begin{eqnarray*}
\mathbf{r}(x^2g) &=& \{(b,c): b,c \in M, (x^2g) b = (x^2g) c\} \\&=& \iota_M\cup \{(xu, xv): u, v \in G\} \cup \{(b,c): b, c \in x^2 G\cup \{x^3,x^4\}\}
\end{eqnarray*} is finitely generated by
$\{ (xh,xk), (x^2,x^4): h, k \in H\}.$
We also have \begin{eqnarray*}
\mathbf{r}(x^3) &=& \{(b,c): b,c \in M,\, x^3 b = x^3 c\} \\&=& \{(b,c): b,c \in G\} \cup \{(b, c):  b, c \in xG \cup x^2 G\cup \{x^3,x^4\}\}
\end{eqnarray*} is finitely generated by
$\{(h,k),  (x,x^4): h, k \in H\}.$  Finally,
\begin{eqnarray*}
\mathbf{r}(x^4) &=& \{(b,c): b,c \in M\}
\end{eqnarray*} is finitely generated by
$\{(1,x^4)\}$.
\end{example}

In the proof of Lemma \ref{lem:FREobstruct} we argued that, under the conditions of the Lemma, there exists a right annihilator congruence of the form $\mathbf{r}((\emptyset, x))$ that is not finitely generated for some $x \in M$. In fact, to determine whether $\mathcal{S}(M)$ is is finitely right equated, it always suffices to consider right annihilator congruences of the form $\mathbf{r}((\emptyset, x))$ for $x \in M$.

\begin{proposition}
\label{prop:only}
Let $M$ be a monoid. The  expansion $\mathcal{S}(M)$ of $M$  is finitely right equated if and only if for all $a \in M$ the right annihilator congruence $\mathbf{r}((\emptyset, a))$ is finitely generated.
\end{proposition}

\begin{proof}
The forward direction is trivial. Suppose then that each right congruence $\mathbf{r}((\emptyset, a))$ is finitely generated. Fix $(A,a) \in \mathcal{S}(M)$
and let $X$ be a finite generating set for $\mathbf{r}((\emptyset, a))$. It is easy to see that $\mathbf{r}((\emptyset, a)) \subseteq \mathbf{r}((A, a))$: indeed, if $(\emptyset, a)(P,p) = (\emptyset, a)(Q,q)$, then $aP = aQ$ and $ap = aq$, from which it immediately follows that $A \cup aP = A \cup aQ$ and $ap = aq$, giving $(A, a)(P,p) = (A, a)(Q,q)$.

Now consider $((U,u), (V,v)) \in \mathbf{r}((A,a))$. Arguing as in the proof of Theorem \ref{thm:sfre}, setting
$U = U_1 \cup U_2$  and $V = V_1 \cup V_2$ where $U_1 = U \setminus a^{-1}A$, $U_2 = U \cap a^{-1}A$, $V_1 = V \setminus a^{-1}A$ and $V_2 = V \cap a^{-1}A$, 
we find that 
\begin{eqnarray*}
(U,u) &=& (\emptyset, 1)(U,u), \qquad  (a^{-1}A, 1)(U,u) = (a^{-1}A, 1)(U_1,u),  \qquad (\emptyset, 1)(U_1,u) = (U_1,u)\\
(V_1, v) &=& (\emptyset, 1)(V_1,v), \qquad (a^{-1}A, 1)(V_1,v) =(a^{-1}A, 1)(V,v), \qquad  (\emptyset, 1)(V,v) = (V,v),
\end{eqnarray*}
where $(A,a)(\emptyset, 1) = (A,a)(a^{-1}A,1)$.
Since there is an $X$-sequence from $U_1$ to $V_1$, 
setting $Y = X \cup \{((\emptyset, 1), (a^{-1}A, 1)), ((a^{-1}A, 1), (\emptyset, 1))\}$ it follows that we can construct a $Y$-sequence from $U$ to $V$. Since $Y$ is finite, this completes the proof.
\end{proof}

\begin{remark}
In light of Proposition \ref{prop:only} and the proof of Theorem \ref{thm:sfre}, to determine whether $\mathcal{S}(M)$ is finitely right equated it suffices to focus on the right congruences $\mathbf{r}((\emptyset, a))$ such that $a \in M$ is not $\mathcal{L}^*$-related to an idempotent.
\end{remark}

If $M$ is strongly finitely right equated, then we have shown in Theorem~\ref{thm:sfre} that $\mathcal{S}(M)$ is strongly finitely right equated, that is, for any fixed $(A,a)\in \mathcal{S}(M)$ there is a finite set generating $W$ for $\mathbf{r}((A,a))$ with respect to which any two elements of $\mathbf{r}((A,a))$ are connected by a $W$-sequence with {\em the same} skeleton.  Likewise, the proof of Theorem~\ref
{thm:FLE} shows that if $\mathcal{S}(M)$ is finitely left equated then 
for any $(A,a)\in \mathcal{S}(M)$ there is a 
finite generating set $Z$ for $\mathbf{l}((A,a))$ such that any pair of elements in $\mathbf{l}((A,a))$ may be connected via a $Z$-sequence having one of finitely many skeletons; this is equivalent to there being a bound on the length of $Z$-sequences required. Further, if $M$ is right cancellative, then we can take a single skeleton. 
\begin{question} \label{qn:skeletons} Characterise the monoids  $M$ such that $\mathcal{S}(M)$ is finitely right equated. Does this characterisation imply that there is a bound on the length of sequences required? \end{question}

\bigskip
The article focuses exclusively on the expansion $\mathcal{S}(M)$ of a monoid $M$. As indicated in the Introduction, this is the largest of a suite of possible expansions, where we make various restrictions on the pairs $(A,a)\in \mathcal{P}(M)\times M$  in the expansion. These include the restriction that $A$ is finite, resulting in the corresponding expansion $\mathcal{S}^f(M)$, and the restriction that $A$ is a finite set containing $\{1,a\}$ as a subset, resulting in the Szendrei expansion Sz$(M)$. In the latter case the expansion is to a monoid subsemigroup of $\mathcal{S}(M)$ with identity  $(\{ 1\},1)$. 

\begin{question}\label{qn:others} What conditions must a monoid $M$ satisfy such that $\mathcal{S}^f(M)$, Sz$(M)$, or other natural related expansions, have the finitary conditions investigated in this article? \end{question} 

We remark that the techniques we have developed here (which for example make frequent use of the fact that the sets $A$ can be chosen to be empty or infinite)  in some  cases would not apply to these `smaller' expansions so that new strategies would be required.

\end{document}